\documentclass[a4paper,twoside,11pt,reqno]{amsart}
\usepackage{fullpage}
\usepackage[T1]{fontenc}
\usepackage[utf8]{inputenc}

\usepackage{hyperref}
\usepackage[slantedGreeks, partialup, noDcommand]{kpfonts}
\usepackage{graphicx}
\usepackage[mathscr]{euscript}
\usepackage{caption}
\usepackage{tikz}
\usetikzlibrary{trees}
\usepackage{dutchcal}

\hypersetup{
     colorlinks   = true,
     citecolor    = black,
     linkcolor    = black,
     urlcolor     = black
}

\newcommand{\real}{\mathbb{R}}
\newcommand{\N}{A}
\newcommand{\D}{B}
\newcommand{\set}{\mathcal{S}}

\newcommand{\dd}{\textnormal{d}}
\newcommand{\expvar}{r}
\newcommand{\onevar}{w}

\newtheorem{theorem}{Theorem}
\newtheorem{lemma}[theorem]{Lemma}

\newtheorem{proposition}[theorem]{Proposition}
\newtheorem{corollary}[theorem]{Corollary}
\newtheorem{conjecture}[theorem]{Conjecture}

\newtheorem*{definition*}{Definition}

\title{Proof of the Holevo--Utkin conjecture on sharp $\ell_p$ norms for zero-sum vectors}

\author{Haonan Zhang}
\address{(H.~Z.)
Department of Mathematics,
University of South Carolina\\
Columbia, SC, 29208, USA.}
\email{haonanzhangmath@gmail.com}

\thanks{The author thanks Alexander Holevo and Andrey Utkin deeply for their careful reading and checking the proofs, as well as for their instructive suggestions that greatly improved the presentation of the paper. He is grateful to Paata Ivanisvili and Xinyuan Xie for valuable feedback on an earlier version of this paper. He also thanks ChatGPT (GPT-5 Pro) for helpful and stimulating discussions. The author is supported by NSF DMS-2453408. }

\begin{document}

\begin{abstract}
Let $d\ge 3$ and $p>0$. Let $\|x\|_p$ denote the $\ell_p$ (quasi-)norm of a $d$-dimensional vector $x$. Holevo and Utkin \cite{HU26} conjectured that for $0<p\le 1$,
\[
\min \left\{\frac{\|x\|_p}{\|x\|_2}:\vec{0}\neq x\in\mathbb R^d,\ \sum_{i=1}^d x_i=0\right\}
=2^{1/p-1/2};
\]
for $1<p<2$,
\[
\min \left\{\frac{\|x\|_p}{\|x\|_2}:\vec{0}\neq x\in\mathbb R^d,\ \sum_{i=1}^d x_i=0\right\}
=
\min\left\{2^{1/p-1/2},\left(\frac{(d-1)^{p/2}+(d-1)^{1-p/2}}{d^{p/2}}\right)^{1/p}\right\};
\]
and for $2<q<\infty$
\[
\max\left\{\frac{\|x\|_q}{\|x\|_2}:\vec{0}\neq x\in\mathbb R^d,\ \sum_{i=1}^d x_i=0\right\}
=
\max\left\{2^{1/q-1/2},\left(\frac{(d-1)^{q/2}+(d-1)^{1-q/2}}{d^{q/2}}\right)^{1/q}\right\}.
\]
They proved the $d=3$ case in \cite{HU26}. In this paper, we confirm the conjecture of the remaining cases $d\ge 4$.
\end{abstract}

\keywords{$\ell_p$-norm, R\'enyi entropy, Wehrl entropy}

\maketitle

\section{Introduction}
For any vector $x=(x_1,\dots, x_d)\in \real^d$, we use $\|x\|_p$ to denote its $\ell_p$ (quasi-)norm:
\begin{equation}
    \|x\|_p^p:=\sum_{1\le i\le d}|x_i|^p,\qquad p>0.
\end{equation}
It is an elementary estimate that for all $0<p\le 2\le q<\infty$
\begin{equation}
    \|x\|_q\le \|x\|_2\le \|x\|_p.
\end{equation}
Clearly, the constant $1$ is best possible and is attained by multiples of the coordinate vectors $e_i$. 

The main result of this paper is an improvement for vectors $x\in\real^d$ with zero sum, that is, $\sum_i x_i=0$. This was studied by Holevo and Utkin \cite{HU26}, motivated by the computation of accessible information for the ensemble of a ``quantum pyramid'', and we refer to 
\cite{HU25,HU26} and references therein for further discussion. They 
also suggest the connection to the Wehrl entropy problem for (the standard representation of) symmetric groups, and we defer the discussion to the end of the introduction. 

In \cite{HU26}, Holevo and Utkin conjectured that the optimal constants are given by the two possible families of optimizers
\[
(1,-1,0,\dots,0)
\]
and
\[
\left(d-1,-1,-1,\dots, -1\right),
\]
up to permutation and a global sign. More precisely, they made the following conjecture.

\begin{conjecture}\label{conj}
    Let $d\ge 3$ and $0<p<2<q<\infty$. Then
\begin{enumerate}
    \item for $0<p\le 1$,
\[
\min \left\{\frac{\|x\|_p}{\|x\|_2}:\vec{0}\neq x\in\mathbb R^d,\ \sum_{i=1}^d x_i=0\right\}
=2^{1/p-1/2};
\]
\item for $1<p<2$,
\[
\min \left\{\frac{\|x\|_p}{\|x\|_2}:\vec{0}\neq x\in\mathbb R^d,\ \sum_{i=1}^d x_i=0\right\}
=
\min\left\{2^{1/p-1/2},\left(\frac{(d-1)^{p/2}+(d-1)^{1-p/2}}{d^{p/2}}\right)^{1/p}\right\};
\]
    \item and for $2<q<\infty$
\[
\max\left\{\frac{\|x\|_q}{\|x\|_2}:\vec{0}\neq x\in\mathbb R^d,\ \sum_{i=1}^d x_i=0\right\}
=
\max\left\{2^{1/q-1/2},\left(\frac{(d-1)^{q/2}+(d-1)^{1-q/2}}{d^{q/2}}\right)^{1/q}\right\}.
\]
\end{enumerate}
\end{conjecture}


The $d=2$ case is trivial. Holevo and Utkin \cite{HU26} proved the case $d=3$ via a beautiful combination of trigonometric series expansion that works for $0<p<2$ and $2<q<4$, and a delicate one-dimensional analysis that handles $q>4$. Indeed, assume $x=(x_1,x_2,x_3)$ satisfies
\begin{equation}\label{d=3}
    \sum_{i=1}^{3}  x_i=0,\qquad \textnormal{and}\qquad \sum_{i=1}^{3}  x_i^2=1.
\end{equation}
The idea of trigonometric series expansion is based on the observation that there is an angle $\theta$ such that 
\[
x_j=\sqrt{\frac{2}{3}}\cos \left(\theta+\frac{2(j-1)\pi}{3}\right),\qquad j=1,2,3.
\]
This, together with the Euler formula and the sum of geometric progression, is the starting point of their proof for $0<p<2$ and $2<q<4$. However, this elegant approach does not seem to extend to $q>4$. To deal with the $q>4$ case, Holevo and Utkin \cite{HU26} rewrite the objective function 
$$F(x)=|x_1|^q+|x_2|^q+|x_3|^q$$
in terms of a new variable $u:=\frac{x_1-x_2}{x_1+x_2}\in [0,1]$ (assume $x_2\le x_1\le 0\le x_3$) under the constraint \eqref{d=3}
 \[g(u):=\left(\frac{2}{3+u^2}\right)^{\!q/2}
\left[
1+\frac{(1+u)^q+(1-u)^q}{2^q}
\right].\]
Then they reformulate the conjectured bound and reveal a monotonicity phenomenon arising in a careful analysis of the function $g$.

As for the case $d=3$ and $q=4$, a direct computation shows that, as observed in \cite{HU26}, for all $x=(x_1,x_2,x_3)$ satisfying
\[
\sum_{i=1}^{3}  x_i=0,\qquad \textnormal{and}\qquad \sum_{i=1}^{3}  x_i^2=1
\]
one always has $\sum_{i=1}^{3} x_i^4\equiv \frac12$. Thus, in this case, the problem is trivial, and all admissible vectors are optimizers. 


\medskip 

We will give another proof of the $d=3$ case in the last section; see Proposition \ref{prop:new} (1). The main result of this paper is the proof of the remaining cases. 

\begin{theorem}\label{thm:main}
    Conjecture \ref{conj} holds for all $d\ge 4$.
\end{theorem}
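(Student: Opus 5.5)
We work with the optimization problem of extremizing $F(x)=\sum_i |x_i|^r$ (with $r=p$ or $r=q$) on the set $\set=\{x\in\real^d:\sum_i x_i=0,\ \sum_i x_i^2=1\}$. This set is compact, so extremizers exist. The plan has three steps.

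\emph{Step 1: reduce the possible shapes of an extremizer.} At an extremizer with all $x_i\neq 0$, the Lagrange condition reads
\[
r\,|x_i|^{r-1}\operatorname{sgn}(x_i)=\lambda+2\mu x_i
\]
for every $i$. The function $t\mapsto r|t|^{r-1}\operatorname{sgn}(t)-2\mu t$ is monotone on each half-line, up to a single change of convexity. Hence each sign class of coordinates takes at most two distinct values.

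A finer second-order argument should show that in fact at most one value occurs on the positive side and at most one on the negative side. The exception is the case $0<p\le1$, where zero coordinates are allowed because $F$ is concave and non-smooth at $0$. The idea is to perturb two equal-sign coordinates $x_i\neq x_j$ in opposite directions, which keeps both constraints to first order once a third coordinate is used. One then checks that the second variation of $F$ has the wrong sign for a minimum (when $r<2$) or for a maximum (when $r>2$).

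For $0<p\le1$ we use a different route. By concavity of $t\mapsto t^{p/2}$ applied to the mass $x_i^2$, a minimizer must have minimal support subject to $\sum x_i=0$. This pushes it to the extreme points of the relevant polytope, i.e.\ to vectors with few nonzero entries. I would then expect to reach $(1,-1,0,\dots,0)$ either directly or via a comparison reducing to the $d=3$ result.

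\emph{Step 2: reduce to two-level vectors.} After Step 1, any extremizer has the form: $k$ coordinates equal to $a>0$, $m$ coordinates equal to $-b<0$, and the rest zero, with $ka=mb$. (For $1<p<2$ and $q>2$ one should also rule out zero coordinates. This follows since $|t|^r$ is $C^1$ at $0$ with zero derivative when $r>1$, so the Lagrange condition at a zero coordinate forces $\lambda=0$, and that case is handled separately.)

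Normalizing the vector, the value of $F$ becomes
\[
\Phi(k,m)=\frac{k\,m^{r/2}+m\,k^{r/2}}{(km)^{r/2}(k+m)^{r/2}}\cdot (km)^{\,r/2}\big/ (km(k+m))^{r/2}\cdots,
\]
or more cleanly
\[
\frac{k^{1-r/2}m^{r/2}+m^{1-r/2}k^{r/2}}{(k+m)^{r/2}}\quad\text{(up to the ratio normalization)}.
\]
It remains to show that over integers $k,m\ge1$ with $k+m\le d$, the extremum of this expression is attained at $(1,1)$ or at $(1,d-1)$. These two choices give exactly $2^{1-r/2}$ and the second expression in the conjecture.

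\emph{Step 3: the discrete problem.} Treat $k,m$ as real variables and $n=k+m$, $s=k/n$. The expression becomes $n^{1-r/2}h(s)$ with
\[
h(s)=s^{1-r/2}(1-s)^{r/2}+(1-s)^{1-r/2}s^{r/2}.
\]
For $r>2$ the factor $n^{1-r/2}$ decreases in $n$, and symmetry plus convexity properties of $h$ in $s$ should push the maximum to the boundary: either $n$ is minimal ($k=m=1$) or $s$ is extreme ($k=1$, $m=d-1$). For $r<2$ the analogous argument applies for the minimum.

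The \textbf{main obstacle} is Step 1: excluding extremizers in which a sign class takes two distinct values, together with the second-order analysis. I would first try to apply the $d=3$ result of Holevo--Utkin to triples of coordinates. Fixing all other coordinates, a triple has a fixed sum and a fixed sum of squares, so the $d=3$ circle parametrization together with its monotonicity analysis should force the local configuration within each triple to be two-level. This would give the required rigidity without a global second-variation computation.
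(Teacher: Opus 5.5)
\textbf{The gap is in Step 1, which is the hard part of the problem.} For $1<p<2$ and $q>2$, the first-order conditions do give at most two values per sign class. In fact there is only a single value on one side. A second variation along $e_i-e_j$ for two \emph{equal} coordinates, combined with the Lagrange relations, then rules out a repeated positive value and a repeated copy of the larger negative value $b$.

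What survives is $x=(a,-b,-c,\dots,-c)$ with $b>c>0$ and $c$ repeated $d-2$ times. No local perturbation of the kind you describe excludes it. The Hessian of the Lagrangian is diagonal with entries $r(r-1)|x_i|^{r-2}-2\mu$. Since $2\mu=r(b^{r-1}-c^{r-1})/(b-c)$, strict convexity (resp.\ concavity) of $s\mapsto s^{r-1}$ shows two things. The entries on the $c$-block have the sign compatible with a maximum (resp.\ minimum). The entries at $a$ and $b$ have the opposite sign. So every tangent perturbation inside the $c$-block passes the second-order test. The only remaining tangent direction is the tangent to the curve $\onevar=c/b\mapsto(a,-b,-c,\dots,-c)$, and along it the second variation has no a priori sign. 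Your perturbation of two distinct values corrected by a third coordinate is a combination of these two kinds of directions, so its sign cannot be settled locally.

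Such three-level critical points really occur: for $1<p<2$ whenever $\Psi(1)<\Psi(0)$, and for instance for $d=4$ with $q$ slightly below $3$. Hence ``at most one value per sign class'' cannot come from a first- or second-variation computation. The paper handles this by a global study of $\Psi(\onevar)=\bigl((1+\ell\onevar)^r+1+\ell\onevar^r\bigr)\bigl(2+2\ell\onevar+\ell(\ell+1)\onevar^2\bigr)^{-r/2}$ with $\ell=d-2$:
\begin{itemize}
\item for $1<p<2$, strict convexity of the auxiliary function $\Theta$ shows any interior critical point is a strict local maximum;
\item for $q\ge3$, a mean value argument using $\ell\ge2$ gives strict monotonicity;
\item for $2<q<3$, an integral representation of $(z^\alpha-1)/(z-1)$ shows $H$ is increasing, so any interior critical point is a local minimum.
\end{itemize}
Your proposal has no substitute for this one-variable analysis.

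\textbf{Your fallback of applying the $d=3$ theorem to triples also fails.} With the other coordinates frozen, a triple lives on $\{y\in\real^3: y_1+y_2+y_3=\sigma,\ \sum_i y_i^2=\tau\}$ with $\sigma\neq0$ in general; for instance $(a,-b,-c)$ has sum $(\ell-1)c$. Since $\sum_i|y_i|^r$ is not invariant under the shift by $\frac{\sigma}{3}(1,1,1)$, this is not the zero-sum problem that Holevo and Utkin solved.

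\textbf{Two smaller points.} First, your discrete Steps 2--3 over $(k,m)$ are not carried out. ``Push to the boundary'' is not an argument, since $n^{1-r/2}$ and $h(1/n)$ pull in opposite directions, which is exactly the phase transition. These steps become unnecessary once you prove, as the paper does, two facts. If $\lambda=0$, then $k=m$ and the value is $(2m)^{1-r/2}$. If $\lambda\neq0$, then all coordinates are nonzero and, after a global sign change, there is exactly one positive coordinate; this follows from the second variation along $e_1-e_2$ together with $a^{r-2}=\lambda/(ra)+F(x)$.

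Second, for $0<p\le1$ your concavity idea can be completed, and it then coincides with the paper's short proof. With $S=\sum_{x_i>0}x_i$, subadditivity of $t\mapsto t^p$ on each sign class gives $\|x\|_p^p\ge2S^p$, while $\|x\|_2^2\le2S^2$.
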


We remark that Holevo and Utkin \cite{HU26} already studied the general $d\ge 3$ via the Lagrange multiplier
\[
F(x,\lambda,\mu)=\sum_i |x_i|^\rho-\lambda\sum_i x_i-\mu\left(\sum_i x_i^2-1\right),\qquad \rho>0.
\]
They proved that the optimizers must be of the form 
\[
x=(\underbrace{a,\dots,a}_{d_0\text{ times}}, 
\underbrace{-b,\dots,-b}_{d_1\text{ times}},
\underbrace{-c,\dots,-c}_{d_2\text{ times}}),\qquad a\ge 0, \qquad b\ge c\ge 0.
\]
This reduces the conjecture to a simpler yet still involved optimization problem, and the conjecture is supported by numerical evidence in \cite{HU26}. 

The proof of our main theorem takes the above Lagrange multiplier as a starting point (for $\rho>1$). One key observation is that there are extra constraints on $d_j,j=0,1,2$, and we will see the essential challenge is to exclude the potential optimizer when $a>0,b>c>0$ and $(d_0,d_1,d_2)=(1,1,d-2)$. This further reduces the problem to a one-dimensional analysis similar to the case $d=3$ treated in \cite{HU26}. However, the analysis of this one-dimensional problem is still non-trivial, as in the case $d=3$. Our proof is inspired by the aforementioned change-of-variable trick in \cite{HU26}, but we have to use different techniques to handle different parameter regimes. 

\medskip

As remarked earlier, we shall not recall the full details of the motivation of Holevo and Utkin coming from the computation
of accessible information for the ensemble of a ``quantum pyramid'' \cite{HU25,HU26}. Here, we only formulate the result in terms of entropies. Recall that for any $\alpha\in (0,1)\cup (1,\infty)$, the $\alpha$-R\'enyi entropy $H_{\alpha}(p)$ of any probability density $p=(p_i)_{i=1}^{d}$ is defined as 
$$H_{\alpha}(p)=\frac{1}{1-\alpha}\log \sum_i p_i^\alpha.$$ 
In particular, it recovers the Shannon entropy $H(p):=-\sum_i p_i \log p_i$ when $\alpha\to 1$. 

Any $x\in \real^d$ with $\|x\|_2=1$ defines a probability density $P_x=(|x_i|^2)_{i=1}^{d}$. Then, combining the results of Holevo--Utkin \cite{HU26} and the above theorem, one has the following equivalent formulation, stated as a corollary. 

This entropy formulation is another way to express the main motivation from Holevo and Utkin; see \cite{HU25,HU26} and the references therein for further discussion. 

\begin{corollary}\label{cor:entropy}
Let $d\ge 3$. For $\alpha\in (0,1)\cup (1,\infty)$, we have 
\begin{equation}
    \min_{x} H_{\alpha}(P_x)=\min \left\{ \log 2, \, \frac{1}{1-\alpha}\log \left( \frac{(d-1)^{\alpha}+(d-1)^{1-\alpha}}{d^{\alpha}}\right)\right\},
\end{equation}
where $x$  runs over all vectors in $\real^d$ such that $\|x\|_2=1$ and $\sum_i x_i=0$.
Taking $\alpha\to 1$, one obtains 
\begin{equation}\label{eq:minimumoutput entropy}
    \min_{x} H(P_x)=\begin{cases}
        \log 2,& d\le 6;\\
        \log d-\frac{d-2}{d}\log (d-1),& d\ge 7,
    \end{cases}
\end{equation}
where $x$ runs over the same set of vectors as above.  
\end{corollary}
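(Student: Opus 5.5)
The plan is to translate the corollary into Theorem~\ref{thm:main} (together with the $d=3$ case of Holevo--Utkin), and then obtain the Shannon case by passing to the limit.

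\textbf{Reduction to Conjecture~\ref{conj}.} Take $x$ with $\|x\|_2=1$ and $\sum_i x_i=0$, and set $\rho=2\alpha$. Then
\[
\sum_i (P_x)_i^\alpha=\sum_i|x_i|^{2\alpha}=\|x\|_\rho^\rho,
\qquad\text{so}\qquad
H_\alpha(P_x)=\frac{1}{1-\alpha}\log \|x\|_\rho^\rho .
\]
The cases split according to $\alpha$.
\begin{itemize}
\item If $\alpha<1$ (so $\rho<2$), the prefactor $\frac{1}{1-\alpha}$ is positive. Minimizing $H_\alpha$ is then the same as minimizing $\|x\|_\rho/\|x\|_2$, which is parts (1)--(2) of Conjecture~\ref{conj} with $p=\rho$.
\item If $\alpha>1$ (so $\rho>2$), the prefactor is negative. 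Minimizing $H_\alpha$ is then the same as maximizing $\|x\|_\rho$, which is part (3) with $q=\rho$.
\end{itemize}
In both cases the extremal value is the min or max of the two candidates. Computed directly:
\begin{itemize}
\item $x=(1,-1,0,\dots,0)/\sqrt2$ gives $\sum_i|x_i|^{2\alpha}=2^{1-\alpha}$, hence $H_\alpha=\log 2$.
\item $x=(d-1,-1,\dots,-1)/\sqrt{d(d-1)}$ gives
\[
\sum_i|x_i|^{2\alpha}=\frac{(d-1)^{\alpha}+(d-1)^{1-\alpha}}{d^\alpha},
\]
and hence the second expression in the corollary.
\end{itemize}
Since $t\mapsto\frac{1}{1-\alpha}\log t$ is monotone with the matching direction in each regime, the min/max in the conjecture becomes a min of entropies. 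For $\frac12\le\alpha<1$ (i.e.\ $1\le p<2$), part (2) gives the formula directly. For $\alpha<\frac12$ (i.e.\ $p\le1$), part (1) says the min is $\log2$. In that range I need to check that the second candidate is at least $\log 2$, so that the formula's $\min$ still returns $\log 2$. This holds because the second vector is admissible, so its ratio is at least the minimum $2^{1/p-1/2}$.

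\textbf{Shannon limit.} Both $H_\alpha(P_x)$ and the right-hand side are continuous in $\alpha$. To pass the limit through the minimum I will use that $\alpha\mapsto H_\alpha(P)$ is nonincreasing and converges to $H(P)$ as $\alpha\to1$.
\begin{itemize}
\item \emph{Upper bound.} The two candidate vectors give $\min_x H\le\min\{\log2,\ \text{second value}\}$ directly.
\item \emph{Lower bound.} For $\alpha<1$ we have $H(P_x)\ge H_\alpha(P_x)$? No: Rényi entropy decreases in $\alpha$, so $H=H_1\le H_\alpha$ for $\alpha<1$, which is the wrong direction. Instead, take $\alpha>1$: then $H(P_x)\ge H_\alpha(P_x)\ge \text{RHS}(\alpha)$ for every admissible $x$. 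Letting $\alpha\downarrow1$ and using continuity of the RHS gives $\min_x H\ge\text{RHS}(1)$.
\end{itemize}
Now compute the limit of the second expression by L'Hôpital. At $\alpha=1$ the argument of the log equals $\frac{(d-1)+1}{d}=1$, so the expression has the form $0/0$, and the limit is minus the derivative of the log-argument at $\alpha=1$:
\[
\frac{d}{d\alpha}\Big|_{\alpha=1}\log\frac{(d-1)^\alpha+(d-1)^{1-\alpha}}{d^\alpha}
=\frac{(d-1)\log(d-1)-\log(d-1)}{d}-\log d .
\]
Negating gives $\log d-\frac{d-2}{d}\log(d-1)$.

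\textbf{Main obstacle: the threshold $d\le6$ versus $d\ge7$.} It remains to compare $\log d-\frac{d-2}{d}\log(d-1)$ with $\log2$, which is equivalent to comparing $d^d$ with $2^d(d-1)^{d-2}$.
\begin{itemize}
\item $d=6$: $6^6=46656$ and $64\cdot5^4=40000$, so $\log 2$ is the smaller value.
\item $d=7$: $7^7=823543$ and $128\cdot6^5=995328$, so the second expression is smaller.
\end{itemize}
For $d\ge7$ I need monotonicity. Rewrite $\log d-\frac{d-2}{d}\log(d-1)=\log\frac{d}{d-1}+\frac{2}{d}\log(d-1)$. Both terms are decreasing for $d\ge 7$, since $\log(d-1)/d$ is decreasing once $d-1\ge e$. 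So the value stays below $\log 2$ for all $d\ge7$. For $3\le d\le6$, checking $d=3,4,5$ directly confirms that the value is $\ge\log2$.
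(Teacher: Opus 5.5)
Your proposal is correct and follows the route the paper intends: substitute $p=2\alpha$ (or $q=2\alpha$) to read the corollary off Theorem~\ref{thm:main} together with the $d=3$ case, then pass to $\alpha\to1$. You also supply the details the paper leaves implicit: the limiting argument via $H\ge H_\alpha$ for $\alpha>1$, the L'H\^opital computation, and the comparison of $d^d$ with $2^d(d-1)^{d-2}$.

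There are two harmless slips. First, $\alpha=\tfrac12$ (i.e.\ $p=1$) belongs to part (1), not part (2), but your admissibility argument covers it anyway. Second, $\log(d-1)/d$ is decreasing only once $\log(d-1)>d/(d-1)$, roughly $d\ge5$, not once $d-1\ge e$; neither slip affects the conclusion for $d\ge7$.
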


\subsection*{Complex vectors}
The main result extends to complex vectors by a convexity argument. In fact, assume $0<p<2$, and let $C_p$ be the minimum value for $\frac{\|x\|_p}{\|x\|_2}$, where $\vec{0}\neq x\in \real^d$ and $\sum_j x_j=0$. We claim that $C_p$ remains the optimal value when replacing real vectors $x$ with complex vectors $z$. Let $\vec{0}\neq z\in\mathbb{C}^d$ be such that $\|z\|_2^2=\sum_j |z_j|^2=1$ and $\sum_j z_j=0$. Writing $z=x+i y$ with $x,y\in \real^d$, one has $\|x\|_2^2+\|y\|_2^2=1$ and $\sum_j x_j=\sum_j y_j=0$. 

Assume $\vec{x},\vec{y}\neq \vec{0}$, since otherwise $\|z\|_p\ge C_p$ trivially. By concavity of $t\mapsto t^{p/2}$, 
\begin{align*}
    \|z\|_p^p=\sum_j (|x_j|^2+|y_j|^2)^{p/2}
&=\sum_j (\|x\|_2^2\cdot (|x_j|/\|x\|_2)^2+\|y\|_2^2\cdot (|y_j|/\|y\|_2)^2)^{p/2}\\
&\ge \sum_j \|x\|_2^2\cdot (|x_j|/\|x\|_2)^p+\|y\|_2^2\cdot (|y_j|/\|y\|_2)^p\\
&=\|x\|_2^2\cdot (\|x\|_p/\|x\|_2)^p+\|y\|_2^2\cdot (\|y\|_p/\|y\|_2)^p\\
&\ge C_p^p(\|x\|_2^2+\|y\|_2^2)\\
&=C_p^p.
\end{align*}
This proves the claim. The case for $q>2$ follows from a similar argument by convexity instead. 

\medskip

\subsection*{Minimum output entropy}
One may also interpret this complex form of Corollary~\ref{cor:entropy} as the minimum output entropy of a quantum-to-classical channel. Let \(P_{V_{\mathbb C}}\) be the orthogonal projection from \(\mathbb C^d\) onto
\[
V_{\mathbb C}:=\left\{z\in \mathbb C^d:\sum_i z_i=0\right\},
\]
and set
\[
v_i=P_{V_{\mathbb C}} e_i=e_i-\frac1d\vec{1},\qquad 1\le i\le d.
\]
Then
\[
\sum_{i=1}^d |v_i\rangle\langle v_i|=\textnormal{id}_{V_{\mathbb C}}.
\]
Therefore, the rank-one operators
\(E_i:=|v_i\rangle\langle v_i|\) form a positive operator-valued measure (POVM), and hence define a quantum-to-classical channel, see for instance \cite{NC10},
\[
\mathcal{E} (\rho)=\bigl(\operatorname{Tr}(E_1\rho),\dots, \operatorname{Tr}(E_d\rho)\bigr).
\]

The minimum output Shannon entropy is attained on pure states by concavity of entropy. Also, for a pure state $z\in V_{\mathbb C}$ with $\|z\|_2=1$, one has
\[
\mathcal E(|z\rangle\langle z|)=P_z.
\]
Thus, \eqref{eq:minimumoutput entropy} can also be understood as the minimum output entropy of the above quantum-to-classical channel $\mathcal E$, since 
\begin{equation}
    \min_{\rho: \textnormal{ state on } V_{\mathbb C}}H(\mathcal E (\rho))
    =\min_{z\in V_{\mathbb C}: \|z\|_2=1} H(\mathcal E(|z\rangle\langle z|)) 
    =\min_{z\in V_{\mathbb C}: \|z\|_2=1} H(P_z)
    =\min_{x\in V: \|x\|_2=1} H(P_x).
\end{equation}

\medskip 

\subsection*{Log-Sobolev inequality on complete graphs}

We also record a comparison with a sharp 2-log-Sobolev inequality of Diaconis and Saloff-Coste.
In a special case, the entropy estimate 
\eqref{eq:minimumoutput entropy} can also be viewed as a refinement of the classical $2$-log-Sobolev inequality for the simple random walk on the complete graph. 
Let $[d]=\{1,\dots,d\}$, let $\pi$ be the uniform probability measure, and write
\[
\operatorname{Ent}_\pi(f):=\pi (f\log f)-\pi(f)\log \pi(f)
=\frac{1}{d}\sum_i f(i)\log f(i)-\left(\frac{1}{d}\sum_i f(i)\right)\log \left(\frac{1}{d}\sum_i f(i)\right).
\]
Consider the random walk on the complete graph with generator
\[
Lf(i):=\frac1{d-1}\sum_{j\ne i} f(j)-f(i).
\]
It generates the semigroup $T_t=e^{tL}$, called the Potts semigroup \cite{GP23}. Its Dirichlet form is
\[
\mathcal E(f,f)=-\mathbb E_\pi[(Lf)f]
=\frac1{d(d-1)}\sum_{1\le i<j\le d}(f(i)-f(j))^2.
\]
Diaconis and Saloff-Coste \cite[Corollary A.5]{DSC96} proved the sharp log-Sobolev inequality
\begin{equation}\label{lsi}
\operatorname{Ent}_\pi(f^2)
\le 
\frac{(d-1)\log(d-1)}{d-2}
\mathcal E(f,f),\qquad  f:[d]\to \real. 
\end{equation}
See also \cite[Eq. (26)]{HU25}. On the zero-sum subspace $V=\{f:[d]\to \real: \sum_i f(i)=0\}$, however, Corollary~\ref{cor:entropy} gives a sharper restricted inequality 

\[
\operatorname{Ent}_\pi(f^2)
\le \frac{d-1}{d}\delta_d\,\mathcal E(f,f),
\qquad f\in V,
\]
where the entropy deficit is
\[
\delta_d:=\max_{x\in V,\ \|x\|_{2}=1}\bigl(\log d-H(P_x)\bigr)
=\begin{cases}
\log(d/2),& d\le 6,\\[2mm]
\dfrac{d-2}{d}\log(d-1),& d\ge 7.
\end{cases}
\]
 This improves \eqref{lsi} after restriction to zero-sum functions because 
 \[
 \frac{d-1}{d}\delta_d
 \le \frac{(d-1)\log(d-1)}{d-2}.
 \]

Another motivation comes from the Wehrl entropy problem going back to Lieb's solution to Wehrl's original conjecture on Glauber coherent states \cite{lieb78} corresponding to the Heisenberg group. It is beyond the scope of this paper to recall the full details and historical results. See, for example \cite{frank} and references therein. We only remark the potential connection here, which was mentioned briefly in \cite{HU26}.

Let $S_d$ be the symmetric group on $d$ letters. Consider the standard (irreducible) representation $\pi$ of $S_d$ over $V=\{x\in \real^d:\sum_i x_i=0\}$ of dimension $d-1$. The action is simply permuting the coordinates: $\pi(\sigma)x=(x_{\sigma^{-1}(i)})_i$. Unlike the known work on some Lie groups, $S_d$ is a finite group without a Lie algebra structure, so there is no notion of highest weight vectors for $\pi$. 


As the main result shows, the two types of vectors $e_i-e_j, 1\le i<j\le d$ and 
$d e_i-\vec{1}, 1\le i\le d$ are extremal for our optimization problems. 


Let us fix one of these two families of extremal unit vectors, call it $v$. Now, take any $x\in V$ with $\|x\|_2=1$, and consider the optimization of the generalized Wehrl entropy functional
\[
\int_{\sigma \in S_d} f(|\langle\pi(\sigma)x, v\rangle|^2 ) \dd \sigma = \frac{1}{d!}\sum_{\sigma \in S_d} f(|\langle\pi(\sigma)x, v\rangle|^2 )
\]
over all $x\in V$ for certain $f$. Take $f(t)=t^\alpha$ as usual. One asks if the minimum (or maximum) is attained when $x$ lies in the orbit of $v$ for $\alpha\in (0,1)$ (or $\alpha\in (1,\infty)$). 

If $v$ comes from $d e_i-\vec{1}, 1\le i\le d$, say $v=c_d(d-1,-1,-1,\dots, -1)$ with $c_d=1/\sqrt{(d-1)d}$, then this is exactly the optimization problem considered in this work because
\[
 \frac{1}{d!}\sum_{\sigma \in S_d} |\langle\pi(\sigma)x, v\rangle|^{2\alpha}=
 \frac{(dc_d)^{2\alpha}}{d}\sum_i |x_i|^{2\alpha}.
\]
However, our main result shows that the extremal value need not arise from the chosen orbit; instead, it exhibits a phase transition between two orbits of extremal vectors.

\medskip

\subsection*{Organization}  Section~\ref{section:p<1} treats the simplest case $0<p\le 1$. The proofs for the cases $1<p<2$ and $q>2$ use the Lagrange multiplier, reducing the problem to a one-dimensional analysis by examining the structure of the optimizers. The general strategies are similar, and we split them into Sections~\ref{section:1<p<2} and~\ref{section:q>2} for clarity. Section~\ref{section:r-analysis} provides a complementary view of the one-dimensional analysis and, in particular, gives another proof of the $d=3$ case proved by Holevo and Utkin \cite{HU26}.

\section{Proof of the main theorem: $0<p\le 1$}
\label{section:p<1}
In this section, we prove the main theorem for $0<p\le 1$. 
\begin{proof}[Proof of Theorem~\ref{thm:main} (1)]
Let
\[
\Lambda_+ := \{i : x_i > 0\}, \qquad \Lambda_- := \{i : x_i < 0\}.
\]
Since $\sum_i x_i = 0$, we have
\[
S := \sum_{i \in \Lambda_+} x_i = -\sum_{i \in \Lambda_-} x_i>0.
\]

For $0<p\le1$, we have for all $a,b \ge 0$,
\[
(a+b)^p \le a^p + b^p,
\]
so 
\[
\sum_{i \in \Lambda_+} x_i^p \ge S^p,
\qquad
\sum_{i \in \Lambda_-} |x_i|^p \ge S^p.
\]
Therefore,
\[
\|x\|_p^p=\sum_{i \in \Lambda_+} x_i^p +\sum_{i \in \Lambda_-} |x_i|^p \ge 2S^p.
\]

On the other hand,
\[
\|x\|_2^2
= \sum_{i \in \Lambda_+} x_i^2 + \sum_{i \in \Lambda_-} x_i^2
\le S^2 + S^2 = 2S^2.
\]
All combined, we have
\[
\frac{\|x\|_p}{\|x\|_2}
\ge 2^{\frac{1}{p}-\frac{1}{2}}.
\]
It is easy to check that the equality can be achieved by
\[
x=(1,-1,0,\dots,0).
\]
\end{proof}

\section{Proof of the main theorem: $1<p<2$}
\label{section:1<p<2}

We start with the Lagrange multiplier initiated in \cite{HU26}.
By homogeneity, it is enough to minimize
\[
F(x):=\sum_{i=1}^d |x_i|^p
\]
subject to
\[
\sum_{i=1}^d x_i=0,
\qquad
\sum_{i=1}^d x_i^2=1.
\]
Set
\[
\set:=
\left\{
 x\in\mathbb R^d:
 \sum_{i=1}^d x_i=0,\
 \sum_{i=1}^d x_i^2=1
\right\}.
\]
The set $\set$ is compact, so $F$ attains its minimum on $\set$.

Define
\[
g(x):=\sum_{i=1}^d x_i,\qquad h(x):=\sum_{i=1}^d x_i^2.
\]
At a minimizer $x\in \set$, the gradients
\[
\nabla g=(1,\dots,1),\qquad \nabla h=2x
\]
are linearly independent: if $\nabla h=c\nabla g$, then $x$ would be constant, and together with $\sum_i x_i=0$ this would force $x=0$, contradicting $\|x\|_2=1$. Hence the Lagrange multiplier theorem gives $\lambda,\mu\in\mathbb R$ such that
\[
\nabla F(x)=\lambda \nabla g(x)+\mu \nabla h(x),
\]
or equivalently
\begin{equation}\label{eq:EL-p}
p|x_i|^{p-2}x_i=\lambda+2\mu x_i,\qquad 1\le i\le d.
\end{equation}

We first isolate the case when the minimizer is of the form $\frac1{\sqrt2}(1,-1,0,\dots,0)$.

\begin{lemma}\label{lem:p-lambda0}
Let $x\in\mathbb R^d$ minimize
\[
F(x)=\sum_{i=1}^d |x_i|^p
\]
subject to the constraint set $\set$ above. Let $\lambda,\mu\in\mathbb R$ be the Lagrange multipliers in \eqref{eq:EL-p}. Then $\mu>0$.

Moreover, if $\lambda=0$, then
\[
F(x)\ge 2^{1-p/2},
\]
with equality exactly for vectors of the form
\[
\frac1{\sqrt2}(1,-1,0,\dots,0)
\]
up to permutation and a global sign.
\end{lemma}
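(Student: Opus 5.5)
To get $\mu>0$, I will multiply \eqref{eq:EL-p} by $x_i$ and sum over $i$. Using $\sum_i x_i=0$ and $\sum_i x_i^2=1$, this gives
\[
pF(x)=p\sum_i |x_i|^p=\lambda\sum_i x_i+2\mu\sum_i x_i^2=2\mu .
\]
So $\mu=pF(x)/2$, and this is positive because $x\neq 0$.

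Now suppose $\lambda=0$. Then \eqref{eq:EL-p} reads $p|x_i|^{p-2}x_i=2\mu x_i$ for every $i$. For each $i$, either $x_i=0$, or after dividing by $x_i$ we get $|x_i|^{p-2}=2\mu/p$. Since $1<p<2$, the map $t\mapsto t^{p-2}$ is strictly decreasing on $(0,\infty)$, so every nonzero coordinate has the same absolute value $t:=(2\mu/p)^{1/(p-2)}>0$. Let $k$ be the number of nonzero coordinates. The zero-sum condition forces equally many entries $+t$ and $-t$, so $k=2m$ with $m\ge1$. The normalization gives $2mt^2=1$. Hence
\[
F(x)=2m\,t^p=(2m)^{1-p/2}.
\]
Since $1-p/2>0$, this is at least $2^{1-p/2}$, with equality exactly when $m=1$, i.e. $x=\frac1{\sqrt2}(e_i-e_j)$. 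This is the stated form up to permutation and global sign.

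I do not expect any step to be a real obstacle. The only points that need care are the case split $x_i=0$ versus $x_i\neq 0$ (we cannot divide by $x_i$ when it vanishes, and $|x_i|^{p-2}$ is undefined there) and the use of $p<2$ to obtain injectivity of $t\mapsto t^{p-2}$.

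One remark on the form of the statement: the inequality $F(x)\ge 2^{1-p/2}$ concerns a minimizer $x$ with $\lambda=0$. The argument above in fact shows more: every critical point with $\lambda=0$ has the form $(\pm t,\dots,0)$ with $2m$ nonzero entries, so its value is $(2m)^{1-p/2}$. This covers both the bound and the characterization of equality.
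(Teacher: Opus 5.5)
Your proof is correct and matches the paper's argument: you multiply the Lagrange equation by $x_i$ and sum to get $pF(x)=2\mu>0$, and when $\lambda=0$ you note that all nonzero coordinates share a common modulus, which forces $F(x)=(2m)^{1-p/2}\ge 2^{1-p/2}$ with equality exactly when $m=1$. Your care with the case $x_i=0$ and your remark that only the critical-point equations are used are fine refinements of that argument.
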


\begin{proof}
Multiplying \eqref{eq:EL-p} by $x_i$ and summing over $i$ gives
\[
pF(x)=\lambda\sum_i x_i+2\mu\sum_i x_i^2=2\mu.
\]
Hence $\mu>0$.

If $\lambda=0$, then \eqref{eq:EL-p} becomes
\[
p|x_i|^{p-2}x_i=2\mu x_i,\qquad 1\le i\le d.
\]
So every nonzero coordinate satisfies
\[
p|x_i|^{p-2}=2\mu,
\]
and thus all nonzero coordinates have the same absolute value. Therefore, up to permutation and a global sign, $x$ is of the form
\[
x=(c,\dots,c,-c,\dots,-c,0,\dots,0)
\]
for some $c>0$ and some integer $m\ge1$, with $m$ positive and $m$ negative entries. Since $\|x\|_2=1$, the constant
\[
c=\frac1{\sqrt{2m}}.
\]
Therefore
\[
F(x)=2m\,c^p=2m(2m)^{-p/2}=(2m)^{1-p/2}.
\]
Since $1-p/2>0$, the minimum is achieved when $m=1$. This proves the lemma.
\end{proof}

We now assume $\lambda\neq0$. The next lemma proves that, up to permutation and a global sign, there is exactly one positive coordinate.

\begin{lemma}\label{lem:p-one-positive}
Under the assumptions above, if $\lambda\neq0$, then after multiplying $x$ by $-1$ if necessary and permuting coordinates, one has
\[
x=(a,-y_1,\dots,-y_{d-1}),\qquad a>0,\ y_j>0.
\]
\end{lemma}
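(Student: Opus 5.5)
We reduce the statement to a study of the scalar function
\[
\varphi(t):=p|t|^{p-2}t-2\mu t,\qquad t\in\real,
\]
since \eqref{eq:EL-p} says exactly that $\varphi(x_i)=\lambda$ for every $i$. Because $1<p<2$, $\varphi$ is continuous with $\varphi(0)=0$. So if $\lambda\neq0$, \emph{no coordinate vanishes}. Replacing $x$ by $-x$ replaces $(\lambda,\mu)$ by $(-\lambda,\mu)$, so we may assume $\lambda>0$, and we know $\mu>0$ by Lemma~\ref{lem:p-lambda0}.

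\emph{Step 1: the negative coordinates are all equal.} For $t=-s<0$ the equation becomes $\psi(s):=2\mu s-ps^{p-1}=\lambda>0$. The function $\psi$ is strictly convex on $(0,\infty)$, satisfies $\psi(0^+)=0$, and is negative near $0$. Hence $\psi=\lambda$ has exactly one root $b>0$, and at that root $\psi'(b)>0$, that is, $\varphi'(-b)<0$. Therefore all negative coordinates equal $-b$.

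\emph{Step 2: the positive coordinates take at most two values.} For $t>0$ we have $\varphi(t)=pt^{p-1}-2\mu t$, which is strictly concave on $(0,\infty)$. So $\varphi=\lambda$ has at most two positive roots $a'\le a$. At the larger root $\varphi'(a)\le0$, and if $a'<a$ then $\varphi'(a')>0$. Since $\sum_i x_i=0$, there is at least one positive and at least one negative coordinate. It remains to show that exactly one coordinate is positive.

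\emph{Step 3: exclude two or more positive coordinates.} We use the second-order necessary condition at a constrained minimizer. For every $v$ with $\sum_i v_i=0$ and $\sum_i x_iv_i=0$, the Hessian of the Lagrangian gives
\[
Q(v):=\sum_i \varphi'(x_i)\,v_i^2\ \ge 0 .
\]
The cases are as follows.
\begin{itemize}
\item If two positive coordinates both equal $a$ with $\varphi'(a)<0$, the tangent vector $v=e_i-e_j$ gives $Q(v)=2\varphi'(a)<0$, a contradiction. The same holds for two negative coordinates, which is consistent with Step 1 but gives no extra information.
\item If two positive coordinates $i,j$ have values in $\{a',a\}$, we pair them with one negative coordinate $k$. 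The tangent conditions $v_i+v_j+v_k=0$ and $x_iv_i+x_jv_j-bv_k=0$ cut out a line in the span of $e_i,e_j,e_k$. On that line we compute $Q$ explicitly from $\varphi'(x_i)$, $\varphi'(x_j)$ and $\varphi'(-b)<0$, and show it is negative.
\end{itemize}
To prove negativity we would use the identities $\varphi(a)=\varphi(a')=\varphi(-b)=\lambda$ to eliminate $\mu$ and $\lambda$. The key comparison is a chord-versus-tangent inequality for the concave branch: the slope $\varphi'(a')$ is smaller than the slope of the chord from $-b$, which is controlled via $\varphi'(-b)$.

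\emph{Main obstacle.} The hard case is when all positive coordinates sit at the smaller root $a'$ with $\varphi'(a')>0$, possibly with multiplicity. There the simple swap $e_i-e_j$ gives no contradiction, and the three-coordinate test direction has to be computed carefully. If the second-order argument becomes too messy, the fallback is to parametrize the surviving configurations
\[
(\underbrace{a',\dots,a'}_{k},\underbrace{a,\dots,a}_{\ell},\underbrace{-b,\dots,-b}_{m})
\]
directly, with $\ell\le1$ by the first case above. We would then show that $F$ strictly decreases when two positive coordinates are merged into one. Merging preserves the zero sum and increases $\|x\|_2$. Since $1<p<2$, the ratio $\|x\|_p^p/\|x\|_2^p$ should go down, which we would check by the one-dimensional inequality $(s+t)^p-s^p-t^p<\tfrac p2(\ldots)$ in the appropriate normalized form. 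This contradicts minimality and leaves exactly one positive coordinate $a>0$ together with $d-1$ negative coordinates.
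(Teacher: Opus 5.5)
\textbf{There is a genuine gap: Step 3 sets out to prove a false statement.} Your Steps 1 and 2 are correct. But after normalizing $\lambda>0$, Step 1 forces the \emph{negative} side to a single value. The lemma, after the sign flip, therefore asserts that there is exactly one \emph{negative} coordinate, not exactly one positive coordinate.

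Concretely, take $x_\ast=(a,-c,\dots,-c)$ with $a=\sqrt{(d-1)/d}$ and $c=1/\sqrt{d(d-1)}$. Solving the two Euler--Lagrange equations gives
\[
\lambda=\frac{pac\,(a^{p-2}-c^{p-2})}{a+c}<0 .
\]
So in your normalization this vector is $-x_\ast=(-a,c,\dots,c)$, which has $d-1$ positive coordinates. Fix $p\in(1,2)$ and let $d$ be large. Then
\[
F(x_\ast)=\frac{(d-1)^{p/2}+(d-1)^{1-p/2}}{d^{p/2}}<1+d^{1-p}<2^{1-p/2},
\]
and the minimizers are then $\pm x_\ast$ up to permutation. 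So the configuration you call the main obstacle (all positive coordinates at the same small root) is the actual optimizer, and no argument can rule it out:
\begin{itemize}
\item In your three-coordinate test, take two coordinates equal to $c$ and the coordinate $-a$. The tangent conditions force $v_k=0$, so the line is spanned by $e_i-e_j$. This gives $Q=2\varphi'(c)\ge 0$, as it must be at a minimizer.
\item The merging fallback, applied at $-x_\ast$, would produce a vector whose ratio is strictly below the global minimum, which is impossible.
\end{itemize}

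\textbf{The missing step is the one you dismissed as giving no extra information.} By Step 1, every negative coordinate equals $-b$, with $\varphi'(-b)=-\psi'(b)<0$. Suppose two coordinates satisfy $x_i=x_j=-b$. Then $v=e_i-e_j$ satisfies $\sum_k v_k=0$ and $\sum_k x_kv_k=0$, so it is a tangent direction. Moreover $Q(v)=2\varphi'(-b)<0$, contradicting the second-order condition. Hence there is exactly one negative coordinate, and multiplying by $-1$ gives the lemma. Step 2 and the remaining case analysis of Step 3 are not needed.

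With this fix, your argument is the paper's proof in mirror image. The paper normalizes $\lambda<0$, so that the positive side has the unique root $a$. It then computes
\[
\Phi''(0)=2p\bigl((p-1)a^{p-2}-F(x)\bigr)<0
\]
along the curve $(x+t(e_1-e_2))/\sqrt{1+2t^2}$. Using $2\mu=pF(x)$, this is exactly the statement $2\varphi'(a)<0$.
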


\begin{proof}
By \eqref{eq:EL-p}, all $x_i$'s must be nonzero, otherwise $\lambda=0$, a contradiction. Replacing $x$ by $-x$ if necessary, we may assume $\lambda<0$.

Consider the function
\[
\phi(t):=pt^{p-1}-2\mu t,\qquad t>0.
\]
By a direct computation,
\[
\phi'(t)=p(p-1)t^{p-2}-2\mu,
\]
so $\phi'$ vanishes at exactly one positive point. Hence $\phi$ is strictly increasing and then strictly decreasing on $(0,\infty)$. Moreover,
\[
\phi(0)=0>\lambda,\qquad \lim_{t\to\infty}\phi(t)=-\infty.
\]
Therefore $\phi(t)=\lambda$ has exactly one positive solution. By \eqref{eq:EL-p}, every positive coordinate of $x$ must satisfy $\phi(t)=\lambda$, so all positive coordinates are equal.

Suppose now that there are at least two positive coordinates, say
\[
x_1=x_2=a>0.
\]
Let
\[
h:=e_1-e_2=(1,-1,0,\dots,0)\in\mathbb R^d.
\]
Consider the curve
\[
\gamma(t):=\frac{x+th}{\sqrt{1+2t^2}},\qquad |t|<a,
\]
or equivalently $\gamma(t)=(\gamma_i(t))_i$ with
\[
\gamma_1(t)=\frac{a+t}{\sqrt{1+2t^2}},\qquad
\gamma_2(t)=\frac{a-t}{\sqrt{1+2t^2}},\qquad
\gamma_i(t)=\frac{x_i}{\sqrt{1+2t^2}},\quad i\ge3.
\]
By definition,
\[
\sum_{i=1}^d \gamma_i(t)=0,\qquad \|\gamma(t)\|_2=1,
\]
so $\gamma(t)$ sits in the constraint set $\set$.

Consider the function
\[
\Phi(t):=F(\gamma(t))=\sum_{i=1}^d |\gamma_i(t)|^p,\qquad |t|<a.
\]
Since $\gamma(0)=x$, the function $\Phi$ has a local minimum at $t=0$. So $\Phi''(0)\ge0$.

Now set
\[
S_0:=\sum_{i=3}^d |x_i|^p.
\]
Then
\[
\Phi(t)=\frac{(a+t)^p+(a-t)^p+S_0}{(1+2t^2)^{p/2}}
= A(t)B(t)
\]
with
\[
A(t):=(a+t)^p+(a-t)^p+S_0,\qquad B(t):=(1+2t^2)^{-p/2}.
\]
A direct computation gives
\[
A(0)=F(x),\qquad A'(0)=0,\qquad A''(0)=2p(p-1)a^{p-2},
\]
and
\[
B(0)=1,\qquad B'(0)=0,\qquad B''(0)=-2p.
\]
Hence
\[
\Phi''(0)=A''(0)B(0)+A(0)B''(0)
=2p\bigl((p-1)a^{p-2}-F(x)\bigr).
\]
On the other hand, multiplying \eqref{eq:EL-p} by $x_i$ and summing over $i$ gives
\[
pF(x)=2\mu.
\]
This, together with \eqref{eq:EL-p} applied to $x_1=a$, yields
\[
pa^{p-1}=\lambda+2\mu a=\lambda+pF(x)a.
\]
Thus
\[
a^{p-2}=\frac{\lambda}{pa}+F(x),
\]
and therefore
\[
(p-1)a^{p-2}-F(x)
=\frac{p-1}{p}\frac{\lambda}{a}+(p-2)F(x)<0
\]
because $\lambda<0$ and $p-2<0$. Hence $\Phi''(0)<0$, a contradiction. Therefore there is at most one positive coordinate, and we conclude the proof.
\end{proof}

The next lemma gives more structure of the negative coordinates.

\begin{lemma}\label{lem:p-structure}
Let $n:=d-1$. Under the assumptions above, write
\[
x=(a,-y),\qquad y=(y_1,\dots,y_n),\qquad a>0,\ y_j>0.
\]
Then, after permutation,
\[
(y_1,\dots,y_n)=(b,c,\dots,c)
\]
for some $b\ge c>0$.
\end{lemma}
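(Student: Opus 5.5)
We argue from the Euler--Lagrange equation \eqref{eq:EL-p} for the negative coordinates, together with the same second-variation device as in the proof of Lemma~\ref{lem:p-one-positive}. Recall that we normalized $\lambda<0$. Writing $x_i=-y_j$ with $y_j>0$, equation \eqref{eq:EL-p} becomes
\[
-p y_j^{p-1}=\lambda-2\mu y_j,\qquad\text{i.e.}\qquad \psi(y_j):=2\mu y_j-p y_j^{p-1}=\lambda .
\]
Now $\psi'(t)=2\mu-p(p-1)t^{p-2}$ vanishes at exactly one point $t_*>0$, since $\mu>0$ and $1<p<2$. Hence $\psi$ is strictly decreasing on $(0,t_*)$ and strictly increasing on $(t_*,\infty)$, with $\psi(0^+)=0$. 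Consequently, for $\lambda<0$ the equation $\psi(t)=\lambda$ has at most two positive solutions, say $c\le t_*\le b$. So the $y_j$ take at most two distinct values.

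It remains to show that the larger value $b$ occurs at most once. Suppose instead that $y_1=y_2=b>c$. We apply the curve from Lemma~\ref{lem:p-one-positive} to the coordinates $x_2=x_3=-b$, perturbing along $h=e_2-e_3$ and renormalizing. The same computation gives
\[
\Phi''(0)=2p\bigl((p-1)b^{p-2}-F(x)\bigr),
\]
and minimality forces $\Phi''(0)\ge 0$. Using $pF(x)=2\mu$, this says $p(p-1)b^{p-2}\ge 2\mu$, which is equivalent to $\psi'(b)\le 0$, i.e.\ $b\le t_*$. On the other hand, $b$ is the larger root, so $b\ge t_*$. If $b>t_*$ we have a contradiction. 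If $b=t_*$, then $b$ is a double root, and because $\psi$ has a strict minimum at $t_*$ there is no second root $c<b$. In that case all $y_j$ are equal, which is allowed with $b=c$.

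In the remaining situation where the larger value appears at least twice and $c$ does not occur, we still need all $y_j=b$. That case is already of the form $(b,c,\dots,c)$ with $b=c$. If both values occur, the argument above shows that $b$ occurs exactly once.

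The main obstacle is the boundary case $\Phi''(0)=0$, i.e.\ $b=t_*$. It is handled by the observation that a double root at the minimum of $\psi$ excludes any other root, so $b=t_*$ forces all $y_j$ equal. Putting the pieces together, after a permutation we obtain $(y_1,\dots,y_n)=(b,c,\dots,c)$ with $b\ge c>0$, where $b$ is the larger of the at most two values.
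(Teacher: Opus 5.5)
Your proof is correct, and it differs from the paper's in the second-variation step.

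The paper constructs a new curve $\bar y+(y-\bar y)\cos\theta+\rho v\sin\theta$. This curve rotates inside the negative block and keeps $a$ fixed. The paper obtains $\Phi''(0)$ proportional to $(p-1)b^{p-2}-\frac{b^{p-1}-c^{p-1}}{b-c}$ and signs it by strict concavity of $s\mapsto s^{p-1}$.

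You instead reuse the normalized swap curve of Lemma~\ref{lem:p-one-positive} on two equal entries $-b$, so no new computation is needed. You sign $(p-1)b^{p-2}-F(x)=-\psi'(b)/p$ by noting that the larger of two distinct roots of $\psi=\lambda$ lies strictly beyond the turning point $t_*$.

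The two arguments rest on the same fact. Subtracting the Euler--Lagrange equations for $b$ and $c$ gives $\frac{b^{p-1}-c^{p-1}}{b-c}=2\mu/p=F(x)$. So the paper's bracket is exactly your quantity, and the mean-value point in $(c,b)$ behind the concavity step is precisely $t_*$.

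\begin{itemize}
\item Your route buys economy and a uniform principle: a coordinate value occurring at least twice must satisfy $\psi'\le 0$ there. This principle also explains Lemma~\ref{lem:p-one-positive}, and it transfers verbatim to $q>2$ with the inequalities reversed.
\item The paper's route gives an explicit second variation that leaves $a$ untouched and displays the multiplicity factor $m(n-m)/n$, at the cost of a new curve.
\end{itemize}

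One minor point: in the case you exclude, both values occur, so $c<t_*<b$ holds strictly. Your separate discussion of $b=t_*$ is therefore vacuous rather than needed.
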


\begin{proof}
By \eqref{eq:EL-p}, each $y_j$ satisfies
\[
-py_j^{p-1}=\lambda-2\mu y_j,
\]
that is, each $y_j$ satisfies
\[
\phi(y_j)=-\lambda>0,
\]
for the same $\phi(t)=pt^{p-1}-2\mu t$ as above. Recall that $\phi$ is strictly increasing and then strictly decreasing, so the equation $\phi(t)=-\lambda$ has at most two positive roots. Thus, the $y_j$'s take at most two distinct values. After permutation,
\[
(y_1,\dots,y_n)=(
\underbrace{b,\dots,b}_{m\text{ times}},
\underbrace{c,\dots,c}_{n-m\text{ times}}
),
\qquad b>c>0,\qquad 1\le m\le n.
\]

When $m=n$, all negative coordinates are equal, and the constraints give
\[
x=
\left(
\sqrt{\frac{d-1}{d}},
-\frac1{\sqrt{d(d-1)}},
\dots,
-\frac1{\sqrt{d(d-1)}}
\right).
\]

Now assume $1\le m<n$. It remains to prove $m=1$. Suppose to the contrary that $m\ge2$. Set
\[
\bar y:=\left(\frac an,\dots,\frac an\right),\qquad
\rho:=\|y-\bar y\|_2,\qquad
v:=\frac1{\sqrt2}(e_1-e_2)\in\mathbb R^n.
\]
Since $y$ is not constant, we have $\rho>0$. By definition and the assumption that $y_1=y_2=b$,
\[
\sum_{j=1}^n (y_j-\bar y_j)=0,\qquad
\sum_{j=1}^n v_j=0,\qquad
\langle y-\bar y,v\rangle=0.
\]

Consider the curve $\widetilde\gamma(\theta)=(a,-\gamma(\theta))$ with
\[
\gamma(\theta):=\bar y+(y-\bar y)\cos\theta+\rho v\sin\theta,
\]
or equivalently
\[
\gamma_1(\theta)=\frac an+\left(b-\frac an\right)\cos\theta+\frac{\rho}{\sqrt2}\sin\theta,
\]
\[
\gamma_2(\theta)=\frac an+\left(b-\frac an\right)\cos\theta-\frac{\rho}{\sqrt2}\sin\theta,
\]
\[
\gamma_j(\theta)=\frac an+\left(b-\frac an\right)\cos\theta,\qquad 3\le j\le m,
\]
\[
\gamma_j(\theta)=\frac an+\left(c-\frac an\right)\cos\theta,\qquad m+1\le j\le n.
\]
Clearly, $\widetilde\gamma(0)=x$. Also, by definition,
\[
\sum_{j=1}^n \gamma_j(\theta)=\sum_{j=1}^n \bar y_j=a.
\]
Moreover,
\[
\|\gamma(\theta)\|_2^2
=\|\bar y\|_2^2+\cos^2\theta\,\|y-\bar y\|_2^2+\sin^2\theta\,\rho^2\|v\|_2^2
=\|\bar y\|_2^2+\|y-\bar y\|_2^2
=\|y\|_2^2,
\]
so $\widetilde\gamma(\theta)=(a,-\gamma(\theta))$ lies in the constraint set $\set$.

Since each $\gamma_j(\theta)$ is continuous and $\gamma_j(0)\in\{b,c\}$, all coordinates remain positive for $|\theta|$ sufficiently small. Therefore the function
\[
\Phi(\theta):=F(\widetilde\gamma(\theta))
= a^p+\sum_{j=1}^n \gamma_j(\theta)^p
\]
has a local minimum at $\theta=0$, so $\Phi''(0)\ge0$.

A direct computation gives
\[
\gamma_1'(0)=\frac{\rho}{\sqrt2},\qquad
\gamma_2'(0)=-\frac{\rho}{\sqrt2},\qquad
\gamma_j'(0)=0\quad\text{for }j\ge3,
\]
and
\[
\gamma_j''(0)=-(y_j-\bar y_j),\qquad 1\le j\le n.
\]
Hence
\[
\Phi''(0)
= p(p-1)\sum_{j=1}^n y_j^{p-2}\gamma_j'(0)^2
+ p\sum_{j=1}^n y_j^{p-1}\gamma_j''(0)
\]
\[
= p(p-1)\rho^2 b^{p-2}
- p\left[
mb^{p-1}\left(b-\frac an\right)
+(n-m)c^{p-1}\left(c-\frac an\right)
\right].
\]
Recall that $a=mb+(n-m)c$, so
\[
b-\frac an=\frac{n-m}{n}(b-c),\qquad
c-\frac an=-\frac{m}{n}(b-c),
\]
and
\[
\rho^2
= m\left(b-\frac an\right)^2+(n-m)\left(c-\frac an\right)^2
=\frac{m(n-m)}{n}(b-c)^2.
\]
Then we may rewrite
\[
\Phi''(0)
= p\frac{m(n-m)}{n}(b-c)^2
\left(
(p-1)b^{p-2}
-\frac{b^{p-1}-c^{p-1}}{b-c}
\right).
\]
Since $b>c$ and the function $s\mapsto s^{p-1}$ is strictly concave on $(0,\infty)$ (we used $1<p<2$ here), we have
\[
\frac{b^{p-1}-c^{p-1}}{b-c}>(p-1)b^{p-2}.
\]
Therefore $\Phi''(0)<0$, a contradiction. Thus $m=1$ and we finish the proof of the lemma.
\end{proof}

Now set $\ell:=d-2=n-1$. We know that a minimizer
\[
x=(a,-b,-c,\dots,-c)
\]
in the above lemma satisfies
\[
a=b+\ell c,\qquad a^2+b^2+\ell c^2=1.
\]
So we may represent the objective function $F(x)$ as a single-variable function. Define
\[
\onevar:=\frac cb\in[0,1].
\]
A simple computation gives
\[
b=b(\onevar)=\frac1{\sqrt{2+2\ell \onevar+\ell(\ell+1)\onevar^2}},\qquad
c=c(\onevar)=\onevar\,b(\onevar),\qquad
a=a(\onevar)=(1+\ell \onevar)b(\onevar),
\]
and it remains to study the minimum of
\[
\Psi(\onevar):=a(\onevar)^p+b(\onevar)^p+\ell c(\onevar)^p
=
\frac{(1+\ell \onevar)^p+1+\ell \onevar^p}
{\bigl(2+2\ell \onevar+\ell(\ell+1)\onevar^2\bigr)^{p/2}},
\qquad 0\le \onevar\le1.
\]
Notice that $\onevar=0$ and $\onevar=1$ correspond to
\[
\frac1{\sqrt2}(1,-1,0,\dots,0)
\quad\text{and}\quad
\left(
\sqrt{\frac{d-1}{d}},
-\frac1{\sqrt{d(d-1)}},
\dots,
-\frac1{\sqrt{d(d-1)}}
\right),
\]
respectively. The endpoint values are the desired possible minima
\[
\Psi(0)=2^{1-p/2},\qquad
\Psi(1)=\frac{(d-1)^{p/2}+(d-1)^{1-p/2}}{d^{p/2}}.
\]

The next lemma studies the function $\Psi$.

\begin{lemma}\label{lem:p-onevar}
Let $\ell\ge 1$ and $1<p<2$. Define
\[
\Psi(\onevar)=\frac{(1+\ell \onevar)^p+1+\ell \onevar^p}{\bigl(2+2\ell \onevar+\ell(\ell+1)\onevar^2\bigr)^{p/2}},
\qquad 0\le \onevar\le 1.
\]
Then $\Psi$ has at most one critical point in $(0,1)$, and any such critical point is a strict local maximum. In particular,
\[
\Psi(\onevar)>\min\{\Psi(0),\Psi(1)\},\qquad 0<\onevar<1.
\]
\end{lemma}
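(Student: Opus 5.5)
Write $N(\onevar):=(1+\ell\onevar)^p+1+\ell\onevar^p$ and $D(\onevar):=2+2\ell\onevar+\ell(\ell+1)\onevar^2$. Then $\Psi=N D^{-p/2}$, and the plan is to reduce the sign of $\Psi'$ to the sign of a simpler auxiliary function. First,
\[
\frac{\Psi'}{\Psi}=\frac{N'}{N}-\frac p2\frac{D'}{D},
\qquad D'(\onevar)=2\ell\bigl(1+(\ell+1)\onevar\bigr).
\]
So on $(0,1)$ the sign of $\Psi'$ equals the sign of
\[
G(\onevar):=2N'(\onevar)D(\onevar)-pN(\onevar)D'(\onevar).
\]
Dividing by $2p\ell$, this is
\[
\bigl((1+\ell \onevar)^{p-1}+\onevar^{p-1}\bigr)D(\onevar)-N(\onevar)\bigl(1+(\ell+1)\onevar\bigr).
\]

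Next I would identify the boundary behaviour. Since $p-1\in(0,1)$, the term $\onevar^{p-1}D$ dominates near $0$, so $G(\onevar)\to 0^+$ with $G>0$ for small $\onevar$, and $\Psi$ increases off $0$. At $\onevar=1$ one computes
\[
\frac{G(1)}{2p\ell}=\bigl(d-1)^{p-1}+1\bigr)d(d-1)-\bigl((d-1)^p+d-1\bigr)d=0,
\]
using $D(1)=d(d-1)$. This is expected, because $\onevar=1$ is the symmetric critical point coming from the Euler--Lagrange equation. Given these facts, the lemma follows once I show that $G$ changes sign at most once on $(0,1)$, from $+$ to $-$. Then any interior critical point is a strict local maximum, there is at most one of them, and $\Psi$ is unimodal (increasing, then decreasing) or monotone increasing. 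In either case $\Psi(\onevar)>\min\{\Psi(0),\Psi(1)\}$ on $(0,1)$.

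To prove the single sign change, I would expand $G/(2p\ell)$ as a sum of terms of the form $(\text{polynomial})\cdot(1+\ell\onevar)^{p-1}$, $(\text{polynomial})\cdot\onevar^{p-1}$, and a polynomial. Then I would divide by one of the power factors, say $\onevar^{p-1}$ or $(1+\ell\onevar)^{p-1}$, and differentiate enough times (a Rolle / Descartes-type argument) to kill the polynomial parts. The aim is an expression whose sign is evident from concavity of $s\mapsto s^{p-1}$, which is the same ingredient used in Lemma~\ref{lem:p-structure}. Counting zeros with multiplicity, and using the known zero at $\onevar=1$ together with the behaviour at $0^+$, should bound the number of sign changes of $G$ in $(0,1)$ by one.

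An alternative route is the substitution $u$ analogous to Holevo--Utkin's $u=\frac{x_1-x_2}{x_1+x_2}$, which may linearize $D$. The main obstacle is this zero-counting step. $G$ mixes two different non-integer powers with quadratic coefficients, so naive differentiation may not terminate cleanly. If it fails, I would split the range: handle $\onevar$ near $1$ by showing $G'(1)<0$ together with a convexity argument, and handle the rest by a direct monotonicity bound on the ratio $\frac{(1+\ell\onevar)^{p-1}+\onevar^{p-1}}{N}$ versus $\frac{1+(\ell+1)\onevar}{D}$.
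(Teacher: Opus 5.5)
Your reduction is sound and matches the paper's. Up to a positive factor, the sign of $\Psi'$ is the sign of the paper's $\Delta(\onevar)=(1-\onevar)(s^{p-1}-s)+(s+1)(\onevar^{p-1}-\onevar)$ with $s=1+\ell\onevar$. Your boundary facts ($\Delta>0$ near $0^+$ and $\Delta(1)=0$) are correct. However, the whole content of the lemma is the step you leave open: showing that $\Delta$ has at most one zero in $(0,1)$. (Note that ``at most one sign change'' is not enough for the first assertion of the lemma; touching zeros would be extra critical points that are not local maxima.) Your proposal only hopes that some Rolle/Descartes argument will work, with the sign ``evident from concavity of $s\mapsto s^{p-1}$''. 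The fallbacks you list are unspecified. As written, this is not a proof.

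The missing idea has two parts: the right normalization and a genuinely nontrivial sign estimate. The paper sets $z=1/\onevar$ and writes $\Delta(\onevar)=\onevar^{p}\,\Theta(z)$ with
\[
\Theta(z)=\ell+2z+(z-1)(z+\ell)^{\alpha+1}-(z+\ell+1)z^{\alpha+1},\qquad \alpha=p-2\in(-1,0).
\]
In this variable the non-power part is linear, so it disappears from $\Theta''$. In your language, this amounts to dividing by $\onevar^{p-1}$: the function $h(\onevar)=\Delta(\onevar)/\onevar^{p-1}=\onevar\,\Theta(1/\onevar)$ has linear polynomial part $2+\ell\onevar$, and $h''(\onevar)=\onevar^{-3}\Theta''(1/\onevar)$.

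The sign of the second derivative does not follow from concavity of $s^{p-1}$, because
\[
\Theta''(z)=2(\alpha+1)\bigl((z+\ell)^\alpha-z^\alpha\bigr)+\alpha(\alpha+1)\bigl((z-1)(z+\ell)^{\alpha-1}-(z+\ell+1)z^{\alpha-1}\bigr)
\]
is the sum of a negative term and a positive term. The paper resolves this as follows:
\begin{itemize}
\item It writes $(z+\ell)^\alpha-z^\alpha=\alpha\int_0^\ell(z+t)^{\alpha-1}\,\dd t$.
\item It bounds the integral by the trapezoid rule, using convexity of $u\mapsto u^{\alpha-1}=u^{p-3}$, which yields $\Theta''>0$ on $(1,\infty)$.
\item Strict convexity, together with $\Theta(1)=0$ (your $\Delta(1)=0$) and $\Theta(z)\to+\infty$ as $z\to\infty$ (your positivity near $0^+$), forces at most one zero $z_0>1$, with $\Theta<0$ on $(1,z_0)$ and $\Theta>0$ on $(z_0,\infty)$.
\end{itemize}
This is exactly the increase-then-decrease behaviour of $\Psi$ you want, and it rules out additional critical points. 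Without an argument of this kind, the proposal does not establish the lemma.
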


\begin{proof}
Write
\[
p=2+\alpha,\qquad -1<\alpha<0.
\]
As before,
\[
\Psi(\onevar)=A(\onevar)B(\onevar)^{-p/2},
\]
with
\[
A(\onevar)=(1+\ell \onevar)^p+1+\ell \onevar^p,
\qquad
B(\onevar)=2+2\ell \onevar+\ell(\ell+1)\onevar^2.
\]
Then
\[
\Psi'(\onevar)=p\ell\,B(\onevar)^{-p/2-1}\Delta(\onevar),
\]
where
\[
\Delta(\onevar)
=
(1-\onevar)\bigl(s^{p-1}-s\bigr)+(s+1)\bigl(\onevar^{p-1}-\onevar\bigr),
\qquad s=1+\ell \onevar.
\]

Set
\[
z:=\frac{1}{\onevar}\in(1,\infty).
\]
A direct computation gives
\[
\Delta(\onevar)=\onevar^{\alpha+2}\,\Theta(z),
\]
where
\[
\Theta(z):=\ell+2z+(z-1)(z+\ell)^{\alpha+1}-(\ell+z+1)z^{\alpha+1}.
\]
Hence
\[
\operatorname{sgn}\Psi'(\onevar)=\operatorname{sgn}\Theta(z),\qquad z=\frac{1}{\onevar}.
\]

We now show that $\Theta$ is strictly convex on $(1,\infty)$.
Differentiating twice,
\[
\Theta''(z)
=
2(\alpha+1)\bigl((z+\ell)^{\alpha}-z^{\alpha}\bigr)
+\alpha(\alpha+1)\Bigl((z-1)(z+\ell)^{\alpha-1}-(z+\ell+1)z^{\alpha-1}\Bigr).
\]
Using
\[
(z+\ell)^{\alpha}-z^{\alpha}
=
\alpha\int_0^\ell (z+t)^{\alpha-1}\,\dd t,
\]
we rewrite
\[
\Theta''(z)
=
\alpha(\alpha+1)\Xi(z),
\]
where
\[
\Xi(z):=
2\int_0^\ell (z+t)^{\alpha-1}\,\dd t
+(z-1)(z+\ell)^{\alpha-1}
-(z+\ell+1)z^{\alpha-1}.
\]

Since $\alpha-1<-1$, the function $u\mapsto u^{\alpha-1}$ is strictly convex on $(0,\infty)$. Hence for $t\in[0,\ell]$,
\[
(z+t)^{\alpha-1}\le \frac{\ell-t}{\ell}z^{\alpha-1}+\frac{t}{\ell}(z+\ell)^{\alpha-1}.
\]
Integrating over $(0,\ell)$ gives
\[
2\int_0^\ell (z+t)^{\alpha-1}\,\dd t
\le
\ell z^{\alpha-1}+\ell(z+\ell)^{\alpha-1}.
\]
Therefore
\begin{align*}
\Xi(z)
&\le
\ell z^{\alpha-1}+\ell(z+\ell)^{\alpha-1}
+(z-1)(z+\ell)^{\alpha-1}
-(z+\ell+1)z^{\alpha-1} \\
&=
(z+\ell-1)(z+\ell)^{\alpha-1}-(z+1)z^{\alpha-1} \\
&=
\bigl((z+\ell)^\alpha-z^\alpha\bigr)-(z+\ell)^{\alpha-1}-z^{\alpha-1}
<0,
\end{align*}
where the last inequality holds because $\alpha<0$ implies $(z+\ell)^\alpha-z^\alpha<0$.

Since $\alpha(\alpha+1)<0$, we conclude
\[
\Theta''(z)=\alpha(\alpha+1)\Xi(z)>0,\qquad z>1.
\]
Thus $\Theta$ is strictly convex on $(1,\infty)$.

Now $\Theta(1)=0$, and since $\alpha+1\in(0,1)$ we also have
\[
\Theta(z)=2z+O(z^{\alpha+1})\to\infty\qquad (z\to\infty).
\]
Recall that $\Theta(1)=0$, so $\Theta$ has at most one zero $z_0$ in $(1,\infty)$.

If no such $z_0$ exists, then $\Theta(z)>0$ for all $z>1$, hence $\Psi'(\onevar)>0$ on $(0,1)$ and $\Psi$ is strictly increasing on $(0,1)$.

If such $z_0$ exists, then strict convexity together with $\Theta(1)=\Theta(z_0)=0$ implies
\[
\Theta(z)<0\qquad \text{for }1<z<z_0,
\]
while $\Theta(z)>0$ for $z>z_0$ because $\Theta(z)\to\infty$ as $z\to\infty$. Therefore
\[
\Psi'(\onevar)>0\qquad \text{for }0<\onevar<\frac1{z_0},
\]
and
\[
\Psi'(\onevar)<0\qquad \text{for }\frac1{z_0}<\onevar<1.
\]
So $\onevar_0=1/z_0$ is the unique critical point in $(0,1)$, and it is a strict local maximum.

In either case,
\[
\Psi(\onevar)>\min\{\Psi(0),\Psi(1)\},\qquad 0<\onevar<1.
\]
This proves the lemma.
\end{proof}

Now we are ready to prove Theorem~\ref{thm:main} for  $1<p<2$.

\begin{proof}[Proof of Theorem~\ref{thm:main} (2)]
Let $x=(x_i)_i$ be a minimizer of
\[
F(x)=\sum_{i=1}^d |x_i|^p
\]
on
\[
\set=\left\{x\in\mathbb R^d:\sum_{i=1}^d x_i=0,\ \sum_{i=1}^d x_i^2=1\right\}.
\]
By the Lagrange multiplier theorem, there exist $\lambda,\mu\in\mathbb R$ such that
\[
p|x_i|^{p-2}x_i=\lambda+2\mu x_i,\qquad i=1,\dots,d.
\]

If $\lambda=0$, then Lemma~\ref{lem:p-lambda0} gives
\[
F(x)\ge 2^{1-p/2},
\]
with equality exactly for
\[
x=\frac1{\sqrt2}(1,-1,0,\dots,0)
\]
up to permutation and multiplication by $-1$.

Now assume $\lambda\neq0$. By Lemma~\ref{lem:p-one-positive}, after multiplying $x$ by $-1$ if necessary and permuting coordinates, we may write
\[
x=(a,-y_1,\dots,-y_{d-1}),\qquad a>0,\quad y_j>0.
\]
If all the $y_j$'s are equal, then necessarily
\[
x=\left(\sqrt{\frac{d-1}{d}},-\frac1{\sqrt{d(d-1)}},\dots,-\frac1{\sqrt{d(d-1)}}\right),
\]
and therefore
\[
F(x)=\Psi(1)=\frac{(d-1)^{p/2}+(d-1)^{1-p/2}}{d^{p/2}}.
\]

Otherwise, Lemma~\ref{lem:p-structure} shows that, up to permutation,
\[
x=(a,-b,-c,\dots,-c),\qquad b>c>0.
\]
With the parameter
\[
\onevar=\frac cb\in(0,1),
\]
the relations
\[
a=b+\ell c,\qquad a^2+b^2+\ell c^2=1,\qquad \ell=d-2,
\]
show that $F(x)=\Psi(\onevar),$ with $\Psi$ defined above. By Lemma~\ref{lem:p-onevar},
\[
\Psi(\onevar)>\min\{\Psi(0),\Psi(1)\},\qquad 0<\onevar<1.
\]
Hence such a vector cannot be a minimizer.

Therefore
\[
\min_{x\in \set}F(x)
=
\min\left\{
2^{1-p/2},
\frac{(d-1)^{p/2}+(d-1)^{1-p/2}}{d^{p/2}}
\right\}.
\]
Since $F(x)=\|x\|_p^p$ on $\set$, taking the $p$-th root proves Theorem~\ref{thm:main}(2).
\end{proof}

\section{Proof of the main theorem: $q>2$}
\label{section:q>2}
Again, we start with the Lagrange multiplier argument, and the proof strategy is similar to the $1<p<2$ case. We repeat the full details for the reader's convenience. However, the estimate for $\Psi$ will be different. By homogeneity, it is enough to maximize
\[
F(x):=\sum_{i=1}^d |x_i|^q
\]
subject to
\[
\sum_{i=1}^d x_i=0,
\qquad
\sum_{i=1}^d x_i^2=1.
\]
Set
\begin{equation}\label{constraint}
    \set:=\left\{x\in\mathbb R^d:\sum_{i=1}^d x_i=0,\ \sum_{i=1}^d x_i^2=1\right\}.
\end{equation}
The set $\set$ is compact, so $F$ attains its maximum on $\set$.

Define
\[
g(x):=\sum_{i=1}^d x_i,
\qquad
h(x):=\sum_{i=1}^d x_i^2.
\]
At a maximizer $x\in\set$, the gradients
\[
\nabla g=(1,\dots,1),
\qquad
\nabla h=2x
\]
are linearly independent: if $\nabla h=c\nabla g$, then $x$ would be constant, and together with $\sum_i x_i=0$ this would force $x=0$, contradicting $\|x\|_2=1$. Hence the Lagrange multiplier theorem gives $\lambda,\mu\in\mathbb R$ such that
\[
\nabla F(x)=\lambda\nabla g(x)+\mu\nabla h(x)
\]
or equivalently
\begin{equation}\label{eq:EL-main}
q|x_i|^{q-2}x_i=\lambda+2\mu x_i,
\qquad 1\le i\le d.
\end{equation}

We will collect some lemmas to analyze the structure of the optimizers before the full proof. The first lemma singles out the case when $2^{-1/2}(1,-1,0,\dots, 0)$ is an optimizer.

\begin{lemma}\label{lem:lambda0}
Let $x\in\mathbb R^d$ maximize
\[
F(x)=\sum_{i=1}^d |x_i|^q
\]
subject to the constraint set $\set$ above \eqref{constraint}.
Let $\lambda,\mu\in\mathbb R$ be the Lagrange multipliers in \eqref{eq:EL-main}.
Then $\mu>0$. Moreover, if $\lambda=0$, then
\[
F(x)\le 2^{1-q/2},
\]
with equality exactly for vectors of the form
\[
\frac1{\sqrt2}(1,-1,0,\dots,0)
\]
up to permutation and multiplication by $-1$.
\end{lemma}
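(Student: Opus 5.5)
The plan is to mirror the proof of Lemma~\ref{lem:p-lambda0} almost verbatim, with $q$ in place of $p$ and the inequality reversed at the final step.

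First, multiply \eqref{eq:EL-main} by $x_i$ and sum over $i$. Using $\sum_i x_i=0$ and $\sum_i x_i^2=1$ on $\set$, this gives
\[
qF(x)=\lambda\sum_i x_i+2\mu\sum_i x_i^2=2\mu .
\]
Since $x\neq 0$ we have $F(x)>0$, hence $\mu=qF(x)/2>0$. Note that for $q>2$ the map $t\mapsto |t|^{q-2}t$ is $C^1$, including at $t=0$, so $F$ is differentiable and the Lagrange condition \eqref{eq:EL-main} is legitimate at every point.

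Second, suppose $\lambda=0$. Then \eqref{eq:EL-main} reads $q|x_i|^{q-2}x_i=2\mu x_i$. For each $i$ with $x_i\neq0$, dividing by $x_i$ gives $q|x_i|^{q-2}=2\mu$. Because $q-2>0$, this determines $|x_i|=(2\mu/q)^{1/(q-2)}=:c>0$ uniquely, so all nonzero coordinates share the absolute value $c$. The zero-sum constraint then forces equally many entries equal to $c$ and to $-c$, say $m\ge1$ of each. The normalization gives $2mc^2=1$, so $c=(2m)^{-1/2}$, and therefore
\[
F(x)=2m\,c^q=(2m)^{1-q/2}.
\]

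Finally, since $1-q/2<0$, the quantity $(2m)^{1-q/2}$ is strictly decreasing in $m$. Hence $F(x)\le 2^{1-q/2}$, with equality if and only if $m=1$, that is, $x=\frac1{\sqrt2}(1,-1,0,\dots,0)$ up to permutation and sign.

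There is no real obstacle here. The only points needing care are that the equation $q|t|^{q-2}=2\mu$ has a unique positive root, which holds because $q\ne2$ and $\mu>0$, and that the sign of the exponent $1-q/2$ is now negative. That negative exponent is exactly what turns the minimum statement of Lemma~\ref{lem:p-lambda0} into the maximum statement here.
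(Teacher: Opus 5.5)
Your proposal is correct and follows the paper's proof essentially verbatim. Both arguments use the identity $qF(x)=2\mu$ from testing \eqref{eq:EL-main} against $x$, the equal absolute values of the nonzero coordinates when $\lambda=0$, balanced signs from the zero-sum constraint, and $F(x)=(2m)^{1-q/2}$, which is maximized at $m=1$ because $1-q/2<0$.
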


\begin{proof}
Multiplying by $x_i$ on both sides of \eqref{eq:EL-main} and summing over $i$ gives
\[
qF(x)=\lambda \sum_i x_i + 2\mu\sum_i x_i^2=2\mu.
\]
So $\mu>0$.
If $\lambda=0$, then \eqref{eq:EL-main} becomes
\[
q|x_i|^{q-2}x_i=2\mu x_i,\qquad 1\le i\le d.
\]
So every nonzero coordinate satisfies
\[
q|x_i|^{q-2}=2\mu
\]
and thus all nonzero coordinates have the same absolute value. Therefore, up to permutation and a global sign, $x$ is of the form
\[
x=(c,\dots,c,-c,\dots,-c,0,\dots,0)
\]
for some $c>0$ and some integer $m\ge1$, with $m$ positive and $m$ negative entries. Since $\|x\|_2=1$, the constant is $c=\frac1{\sqrt{2m}}$.
Therefore
\[
F(x)=2mc^q=2m(2m)^{-q/2}=(2m)^{1-q/2}.
\]
Since $q>2$, $1-q/2<0$. Hence, $F(x)=(2m)^{1-q/2}$ achieves the maximum when $m=1$.
\end{proof}

The next lemma proves that when $\lambda \neq 0$, up to permutation and a global sign, there is exactly one positive coordinate.

\begin{lemma}\label{lem:one-positive}
Under the assumptions of Lemma~\ref{lem:lambda0}, if $\lambda\neq0$, then after multiplying $x$ by $-1$ if necessary and permuting coordinates, one has
\[
x=(a,-y_1,\dots,-y_{d-1}),
\qquad a>0,
\qquad y_j>0.
\]
\end{lemma}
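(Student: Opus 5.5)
The plan is to mirror the proof of Lemma~\ref{lem:p-one-positive}, with the sign conventions flipped to match maximization and $q>2$.

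\textbf{Step 1: all coordinates are nonzero.} By \eqref{eq:EL-main}, if some $x_i=0$ then $\lambda=0$. Since $\lambda\neq0$, every coordinate is nonzero. Replacing $x$ by $-x$ flips the sign of $\lambda$ and keeps $\mu$, so we may assume $\lambda>0$. Because $\sum_i x_i=0$ and $x\neq0$, there is at least one positive and at least one negative coordinate.

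\textbf{Step 2: all positive coordinates are equal.} Set
\[
\phi(t):=qt^{q-1}-2\mu t,\qquad t>0,
\]
with $\mu>0$ from Lemma~\ref{lem:lambda0}. Then $\phi'(t)=q(q-1)t^{q-2}-2\mu$ has exactly one positive zero. Since $q>2$, $\phi$ is strictly decreasing and then strictly increasing, with $\phi(0^+)=0$ and $\phi(t)\to\infty$. Hence $\phi$ is negative on an initial interval, and the equation $\phi(t)=\lambda>0$ has exactly one positive root. Every positive coordinate satisfies $\phi(x_i)=\lambda$, so all positive coordinates are equal.

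\textbf{Step 3: a second-variation argument rules out two positive coordinates.} Suppose $x_1=x_2=a>0$. Use the same curve as before,
\[
\gamma(t)=\frac{x+t(e_1-e_2)}{\sqrt{1+2t^2}},
\]
which stays in $\set$. Let $\Phi(t)=F(\gamma(t))$. The identical computation (with $p$ replaced by $q$) gives
\[
\Phi''(0)=2q\bigl((q-1)a^{q-2}-F(x)\bigr).
\]
Since $x$ is a maximizer, $\Phi''(0)\le0$. On the other hand, $qF(x)=2\mu$, and \eqref{eq:EL-main} at $x_1=a$ reads $qa^{q-1}=\lambda+qF(x)a$, so
\[
a^{q-2}=\frac{\lambda}{qa}+F(x).
\]
Therefore
\[
(q-1)a^{q-2}-F(x)=\frac{q-1}{q}\frac{\lambda}{a}+(q-2)F(x)>0,
\]
because $\lambda>0$ and $q>2$. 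This gives $\Phi''(0)>0$, a contradiction. So there is exactly one positive coordinate and, after permuting, $x=(a,-y_1,\dots,-y_{d-1})$ with $a>0$ and $y_j>0$.

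\textbf{Main obstacle.} The only delicate point is choosing the sign normalization correctly. Taking $\lambda>0$, rather than $\lambda<0$ as in the $p<2$ case, is what makes the root of $\phi(t)=\lambda$ unique, because $\phi$ now dips below zero before increasing. The same choice also makes both terms in the second-variation expression positive, which is the sign needed to contradict maximality.
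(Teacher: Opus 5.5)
Your proposal is correct and follows the paper's proof essentially step for step. You normalize to $\lambda>0$ and use the decreasing-then-increasing shape of $\varphi(t)=qt^{q-1}-2\mu t$ to force all positive coordinates to be equal. You then rule out two positive coordinates by the second variation along $\gamma(t)=(x+t(e_1-e_2))/\sqrt{1+2t^2}$, using $qF(x)=2\mu$ to show $\Phi''(0)>0$.
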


\begin{proof}
Replacing $x$ by $-x$ if necessary, we may assume $\lambda>0$. By \eqref{eq:EL-main}, all $x_i$'s must be nonzero, otherwise $\lambda=0$ leading to a contradiction.

Consider the function
\[
\varphi(t):=qt^{q-1}-2\mu t,
\qquad t>0.
\]
By a direct computation,
\[
\varphi'(t)=q(q-1)t^{q-2}-2\mu,
\]
so $\varphi'$ vanishes at exactly one positive point. Hence $\varphi$ is strictly decreasing and then strictly increasing on $(0,\infty)$. Moreover,
\[
\varphi(0)=0<\lambda,
\qquad
\lim_{t\to\infty }\varphi(t)=\infty.
\]
Therefore $\varphi(t)=\lambda$
has exactly one positive solution. By \eqref{eq:EL-main}, every positive coordinate of $x$ must satisfy $\varphi(t)=\lambda$, so all positive coordinates are equal.

Suppose now that there are at least two positive coordinates, say,
\[
x_1=x_2=a>0.
\]
Let
\[
h:=e_1-e_2=(1,-1,0,\dots,0)\in\real^d.
\]

Consider the curve
\[
\gamma(t):=\frac{x+th}{\sqrt{1+2t^2}},
\qquad |t|<a,
\]
or $\gamma(t)=(\gamma_i(t))_i$ with
\[
\gamma_1(t)=\frac{a+t}{\sqrt{1+2t^2}},
\qquad
\gamma_2(t)=\frac{a-t}{\sqrt{1+2t^2}},
\qquad
\gamma_i(t)=\frac{x_i}{\sqrt{1+2t^2}},
\quad i\ge3.
\]
By definition,
\[
\sum_{i=1}^d \gamma_i(t)=0,
\qquad
\|\gamma(t)\|_2=1,
\]
so $\gamma(t)$ remains in the constraint set $\set$.

A direct computation gives
\[
\gamma_i'(t)=\frac{h_i-2t x_i}{(1+2t^2)^{3/2}},
\qquad
\gamma_i''(t)=\frac{-2x_i-6t h_i+8t^2x_i}{(1+2t^2)^{5/2}},
\]
with $h=(h_i)_i$ and $h_1=1,h_2=-1, h_i=0, i\ge 3$.
Consider the function
\[
\Phi(t):=F(\gamma(t))=\sum_{i=1}^d |\gamma_i(t)|^q=\sum_{i=1}^d \gamma_i(t)^q,\qquad |t|<a.
\]
Since $\gamma (0)=x$, $\Phi$ has a local maximum at $t=0$. So $\Phi''(0)\le0.$

Now we compute $\Phi''(0)$ directly. Setting
\[
S:=\sum_{i=3}^d |x_i|^q,
\]
we have
\[
\Phi(t)=\frac{(a+t)^q+(a-t)^q+S}{(1+2t^2)^{q/2}}=\N(t)\D(t)
\]
with
\[
\N(t):=(a+t)^q+(a-t)^q+S,
\qquad
\D(t):=(1+2t^2)^{-q/2}.
\]
A direct computation gives
\[
\N(0)=F(x),\qquad \N'(0)=0,\qquad \N''(0)=2q(q-1)a^{q-2},
\]
and
\[
\D(0)=1,\qquad \D'(0)=0,\qquad \D''(0)=-2q.
\]
Hence
\begin{equation}\label{eq:Theta-second-main}
\Phi''(0)=\N''(0)\D(0)+\N(0)\D''(0)=2q\bigl((q-1)a^{q-2}-F(x)\bigr).
\end{equation}
On the other hand, multiplying \eqref{eq:EL-main} by $x_i$ and summing over $i$ gives
\[
qF(x)=\lambda \sum_i x_i +2\mu \sum_i x_i^2=2\mu.
\]
This, together with \eqref{eq:EL-main} applied to $x_1=a$, yields
\[
qa^{q-1}=\lambda+2\mu a=\lambda+qF(x)a.
\]
Thus
\[
a^{q-2}=\frac{\lambda}{qa}+F(x),
\]
and therefore
\[
(q-1)a^{q-2}-F(x)=\frac{q-1}{q}\frac{\lambda}{a}+(q-2)F(x)>0.
\]
By \eqref{eq:Theta-second-main}, this implies $\Phi''(0)>0$, a contradiction.
Therefore, there is at most one positive coordinate, and we conclude the proof.
\end{proof}

The next lemma gives more structure of the negative coordinates.

\begin{lemma}\label{lem:structure-reduction}
Let $n:=d-1$. Under the assumptions of Lemma~\ref{lem:one-positive}, write
\[
x=(a,-y),\qquad y=(y_1,\dots,y_n),
\qquad a>0,
\qquad y_j>0.
\]
Then, after permutation,
\[
(y_1,\dots,y_n)=(b,c,\dots,c)
\]
for some $b\ge c>0$.
\end{lemma}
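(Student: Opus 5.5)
The plan is to mirror the proof of Lemma~\ref{lem:p-structure}, with the inequalities reversed because we now maximize and $s\mapsto s^{q-1}$ is convex. By Lemma~\ref{lem:one-positive} we may assume $\lambda>0$ and $x=(a,-y)$ with all $y_j>0$. Plugging $x_i=-y_j$ into \eqref{eq:EL-main} gives $-qy_j^{q-1}=\lambda-2\mu y_j$, that is,
\[
\varphi(y_j)=-\lambda<0,\qquad \varphi(t)=qt^{q-1}-2\mu t .
\]
As shown in the proof of Lemma~\ref{lem:one-positive}, $\varphi$ is strictly decreasing and then strictly increasing on $(0,\infty)$, with $\varphi(0)=0$. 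Hence the equation $\varphi(t)=-\lambda$ has at most two positive roots, and the $y_j$'s take at most two values $b>c>0$. After permutation, $y=(b,\dots,b,c,\dots,c)$ with $m$ copies of $b$. If all $y_j$ are equal, the constraints force the vector $(\sqrt{(d-1)/d},-1/\sqrt{d(d-1)},\dots)$, which is the case $b=c$. So it remains to rule out $2\le m<n$.

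Suppose $m\ge2$, so $y_1=y_2=b$. I will use the same rotation curve as before. Set $\bar y=(a/n,\dots,a/n)$, $\rho=\|y-\bar y\|_2>0$ and $v=\frac1{\sqrt2}(e_1-e_2)$. Then $v$ has zero sum and $\langle y-\bar y,v\rangle=0$ because $y_1=y_2$. Define
\[
\gamma(\theta)=\bar y+(y-\bar y)\cos\theta+\rho v\sin\theta .
\]
The curve $(a,-\gamma(\theta))$ stays in $\set$: its coordinate sum is preserved, and its norm is preserved by orthogonality. Its coordinates stay positive for small $|\theta|$, so $\Phi(\theta)=a^q+\sum_j\gamma_j(\theta)^q$ is smooth near $0$. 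Since $\Phi$ has a local maximum at $\theta=0$, we get $\Phi''(0)\le 0$.

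The computation of $\Phi''(0)$ is identical to the one in Lemma~\ref{lem:p-structure} with $p$ replaced by $q$. It uses $\gamma_1'(0)=-\gamma_2'(0)=\rho/\sqrt2$, $\gamma_j'(0)=0$ for $j\ge3$, $\gamma_j''(0)=-(y_j-\bar y_j)$, $a=mb+(n-m)c$ and $\rho^2=\frac{m(n-m)}{n}(b-c)^2$. The result is
\[
\Phi''(0)=q\frac{m(n-m)}{n}(b-c)^2\left((q-1)b^{q-2}-\frac{b^{q-1}-c^{q-1}}{b-c}\right).
\]
Since $q>2$, the map $s\mapsto s^{q-1}$ is strictly convex on $(0,\infty)$. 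The chord slope over $[c,b]$ is therefore strictly smaller than the derivative $(q-1)b^{q-2}$ at the right endpoint. Hence $\Phi''(0)>0$, contradicting maximality, and so $m=1$.

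The only delicate points are bookkeeping. The perturbation must be placed on two coordinates equal to the larger value $b$, since that is what puts $b^{q-2}$, and not $c^{q-2}$, in the formula. The sign of the convexity comparison must also come out the right way for $q>2$. I expect no real obstacle beyond this: the whole argument is the $q$-analogue of the earlier lemma.
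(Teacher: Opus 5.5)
Your proposal is correct and follows the paper's proof essentially step for step. The shared steps are the at-most-two-roots argument for $\varphi(t)=-\lambda$, the rotation curve $\gamma(\theta)=\bar y+(y-\bar y)\cos\theta+\rho v\sin\theta$ acting on two copies of the larger value $b$, the identical formula for $\Phi''(0)$, and strict convexity of $s\mapsto s^{q-1}$ to force $\Phi''(0)>0$, which contradicts maximality.
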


\begin{proof}
By \eqref{eq:EL-main}, each $y_j$ satisfies
\[
qy_j^{q-1}=-\lambda+2\mu y_j,
\]
that is, each $y_j$ satisfies
\[
\varphi(y_j)=-\lambda<0,
\]
for the same $\varphi(t)=qt^{q-1}-2\mu t$ as above.
Recall that $\varphi(0)=0$, $\varphi$ is strictly decreasing and then strictly increasing, the equation $\varphi(t)=-\lambda$
has at most two positive roots. Thus, the $y_j$'s take at most two distinct values. After permutation,
\[
(y_1,\dots,y_n)=(\underbrace{b,\dots,b}_{m\text{ times}},\underbrace{c,\dots,c}_{n-m\text{ times}}),
\qquad b>c>0,
\qquad 1\le m\le n.
\]
When $m=n$, all negative coordinates are equal, and the constraints give the endpoint vector
\[
x=\left(\sqrt{\frac{d-1}{d}},-\frac1{\sqrt{d(d-1)}},\dots,-\frac1{\sqrt{d(d-1)}}\right).
\]

Now we assume $1\le m<n$, i.e., there are two distinct values of negative coordinates. It remains to prove $m=1$. Suppose to the contrary that $m\ge2$. Set
\[
\bar y:=\left(\frac{a}{n},\dots,\frac{a}{n}\right),
\qquad
\rho:=\|y-\bar y\|_2,
\qquad
v:=\frac1{\sqrt2}(e_1-e_2)\in \real^n.
\]
Since $y$ is not constant, we have $\rho>0$. By definition and the assumption that $y_1=y_2=b$,
\[
\sum_{j=1}^n (y_j-\bar y_j)=0,
\qquad
\sum_{j=1}^n v_j=0,
\qquad
\langle y-\bar y,v\rangle=0.
\]

Consider the curve $\widetilde{\gamma}(\theta)=(a,-\gamma(\theta))$ with
\[
\gamma(\theta):=\bar y+(y-\bar y)\cos \theta+\rho v\sin \theta,
\]
or equivalently
\[
\gamma_1(\theta)=\frac{a}{n}+\left(b-\frac{a}{n}\right)\cos \theta+\frac{\rho}{\sqrt2}\sin \theta,
\]
\[
\gamma_2(\theta)=\frac{a}{n}+\left(b-\frac{a}{n}\right)\cos \theta-\frac{\rho}{\sqrt2}\sin \theta,
\]
\[
\gamma_j(\theta)=\frac{a}{n}+\left(b-\frac{a}{n}\right)\cos \theta,
\qquad 3\le j\le m,
\]
\[
\gamma_j(\theta)=\frac{a}{n}+\left(c-\frac{a}{n}\right)\cos \theta,
\qquad m+1\le j\le n.
\]
Clearly, $\widetilde{\gamma}(0)=x$. Also, by definition,
\[
\sum_{j=1}^n \gamma_j(\theta)=\sum_{j=1}^n \bar y_j=a.
\]
Moreover,
\[
\|\gamma(\theta)\|_2^2
=\|\bar y\|_2^2+\cos^2(\theta)\|y-\bar y \|_2^2+\sin^2(\theta)\rho^2\|v\|_2^2
=\|\bar y\|_2^2+\|y-\bar y \|_2^2
=\|y\|_2^2,
\]
so $\widetilde{\gamma}(\theta)=(a,-\gamma(\theta))$ lies in the constraint set $\set$.

Since each $\gamma_j(\theta)$ is continuous and $\gamma_j(0)\in\{b,c\}$, all coordinates remain positive for $|\theta|$ sufficiently small. Therefore the function
\[
\Phi(\theta):=F(\widetilde{\gamma}(\theta))=a^q+\sum_{j=1}^n \gamma_j(\theta)^q
\]
has a local maximum at $\theta=0$, so $\Phi''(0)\le0$.

A direct computation gives
\[
\gamma_1'(0)=\frac{\rho}{\sqrt2},
\qquad
\gamma_2'(0)=-\frac{\rho}{\sqrt2},
\qquad
\gamma_j'(0)=0 \textnormal{ for } j\ge3,
\]
and
\[
\gamma_j''(0)=-(y_j-\bar y_j),
\qquad 1\le j\le n.
\]
Hence
\begin{align*}
    \Phi''(0)
    &=q(q-1)\sum_{j=1}^n y_j^{q-2}\gamma_j'(0)^2+q\sum_{j=1}^n y_j^{q-1}\gamma_j''(0)\\
&=q(q-1)\rho^2 b^{q-2}-q\Bigl[m b^{q-1}\Bigl(b-\frac{a}{n}\Bigr)+(n-m)c^{q-1}\Bigl(c-\frac{a}{n}\Bigr)\Bigr].
\end{align*}

Recall that $a=mb+(n-m)c$, so
\[
b-\frac{a}{n}=\frac{n-m}{n}(b-c),
\qquad
c-\frac{a}{n}=-\frac{m}{n}(b-c),
\]
and
\[
\rho^2=m\left(b-\frac{a}{n}\right)^2+(n-m)\left(c-\frac{a}{n}\right)^2
=\frac{m(n-m)}{n}(b-c)^2.
\]
Then we may rewrite
\[
\Phi''(0)=q\frac{m(n-m)}{n}(b-c)^2\left((q-1)b^{q-2}-\frac{b^{q-1}-c^{q-1}}{b-c}\right).
\]
Since $b>c$ and the function $s\mapsto s^{q-1}$ is strictly convex (we used $q>2$ here) on $(0,\infty)$, we have
\[
(q-1)b^{q-2}-\frac{b^{q-1}-c^{q-1}}{b-c}>0.
\]
Therefore $\Phi''(0)>0$, a contradiction. Thus $m=1$ and we finish the proof of the lemma.
\end{proof}

Now set $\ell:=d-2=n-1$. We know that the optimizer $x=(a,-b,-c,\dots, -c)$ in the above lemma satisfies
\[
a=b+\ell c, \qquad a^2+b^2+\ell c^2=1.
\]
So we may represent the objective function $F(x)$ as a single-variable function. Define
\[
\onevar:=\frac{c}{b}\in[0,1].
\]
A simple computation gives
\[
b=b(\onevar)=\frac{1}{\sqrt{2+2\ell \onevar+\ell(\ell+1)\onevar^2}},
\qquad
c=c(\onevar)=\onevar\,b(\onevar),
\qquad
a=a(\onevar)=(1+\ell \onevar)b(\onevar)
\]
and it remains to study the maximum of
\[
\Psi(\onevar):=a(\onevar)^q+b(\onevar)^q+\ell c(\onevar)^q
=
\frac{(1+\ell \onevar)^q+1+\ell \onevar^q}{\bigl(2+2\ell \onevar+\ell(\ell+1)\onevar^2\bigr)^{q/2}},
\qquad 0\le \onevar\le1.
\]
Notice that $\onevar=0$ and $\onevar=1$ correspond to
\[
\frac1{\sqrt2}(1,-1,0,\dots,0)
\]
and
\[
\left(\sqrt{\frac{d-1}{d}},-\frac1{\sqrt{d(d-1)}},\dots,-\frac1{\sqrt{d(d-1)}}\right),
\]
respectively. The endpoint values are the desired possible maxima:
\[
\Psi(0)=2^{1-q/2},
\qquad
\Psi(1)=\frac{(\ell+1)^{q/2}+(\ell+1)^{1-q/2}}{(\ell+2)^{q/2}}.
\]
The next lemma studies the function $\Psi$.

\begin{lemma}
\label{lem:onevar}
Let $\ell\ge2$ and $q>2$. Consider
\[
\Psi(\onevar)=\frac{(1+\ell \onevar)^q+1+\ell \onevar^q}{\bigl(2+2\ell \onevar+\ell(\ell+1)\onevar^2\bigr)^{q/2}},
\qquad 0\le \onevar\le1
\]
defined above. Then we have the following.
\begin{enumerate}
\item If $q\ge3$, then $\Psi$ is strictly increasing on $[0,1]$.
\item If $2<q<3$, then
\[
\Psi(\onevar)< \max\{\Psi(0),\Psi(1)\},\qquad \onevar\in (0,1).
\]
\end{enumerate}
\end{lemma}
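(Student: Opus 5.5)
Following the proof of Lemma~\ref{lem:p-onevar}, I will factor $\Psi'$ in the same way, now with $q$ in place of $p$. Writing $A(\onevar)=(1+\ell\onevar)^q+1+\ell\onevar^q$ and $B(\onevar)=2+2\ell\onevar+\ell(\ell+1)\onevar^2$, we have $B'=2\ell(1+(\ell+1)\onevar)=2\ell(s+\onevar)$ with $s=1+\ell\onevar$. Hence
\[
\Psi'(\onevar)=q\ell\,B(\onevar)^{-q/2-1}\Delta(\onevar),\qquad
\Delta(\onevar)=(1-\onevar)\bigl(s^{q-1}-s\bigr)+(s+1)\bigl(\onevar^{q-1}-\onevar\bigr).
\]
Setting $q=2+\alpha$ with $\alpha>0$ and $z=1/\onevar\in(1,\infty)$, the same algebra gives $\Delta(\onevar)=\onevar^{\alpha+2}\Theta(z)$, where
\[
\Theta(z)=\ell+2z+(z-1)(z+\ell)^{\alpha+1}-(\ell+z+1)z^{\alpha+1},
\]
so that $\operatorname{sgn}\Psi'(\onevar)=\operatorname{sgn}\Theta(1/\onevar)$. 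We also have $\Theta(1)=0$ and
\[
\Theta''(z)=\alpha(\alpha+1)\,\Xi(z),\qquad
\Xi(z)=2\int_0^\ell (z+t)^{\alpha-1}\,\dd t+(z-1)(z+\ell)^{\alpha-1}-(z+\ell+1)z^{\alpha-1}.
\]

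For part (1), $q\ge3$, i.e.\ $\alpha\ge1$, I will show directly that $\Theta>0$ on $(1,\infty)$. When $\alpha\ge1$ the map $u\mapsto u^{\alpha-1}$ is nondecreasing and convex (constant when $\alpha=1$). I therefore expect the trapezoid comparison to reverse: $2\int_0^\ell(z+t)^{\alpha-1}\,\dd t\le \ell\bigl(z^{\alpha-1}+(z+\ell)^{\alpha-1}\bigr)$ still holds, but it now bounds $\Xi$ in the wrong direction. So I will instead look for a lower bound, $\int_0^\ell(z+t)^{\alpha-1}\,\dd t\ge \ell z^{\alpha-1}$, which gives
\[
\Xi(z)\ge (z+\ell-1)(z+\ell)^{\alpha-1}+\dots
\]
Since that route looks delicate, I plan to verify $\Theta>0$ more concretely instead. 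First, check $\Theta'(1)$: a direct computation gives
\[
\Theta'(1)=2+(1+\ell)^{\alpha+1}-(\alpha+1)(\ell+2)-1,
\]
and I need this to be $\ge0$ for $\alpha\ge1$, $\ell\ge2$. At $\alpha=1$ it equals $(\ell+1)^2-2\ell-3=\ell^2-2\ge 2$, and it is increasing in $\alpha$ by Bernoulli's inequality. Then I will show $\Theta$ is convex on $[1,\infty)$ by proving $\Xi\ge0$ there. Using monotonicity of $u^{\alpha-1}$,
\[
2\int_0^\ell\ge 2\ell z^{\alpha-1}\quad\text{and}\quad (z-1)(z+\ell)^{\alpha-1}\ge (z-1)z^{\alpha-1},
\]
so $\Xi(z)\ge z^{\alpha-1}\bigl(2\ell+z-1-z-\ell-1\bigr)=(\ell-2)z^{\alpha-1}\ge0$ for $\ell\ge2$. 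This is exactly where the hypothesis $\ell\ge2$ enters. Convexity together with $\Theta(1)=0$ and $\Theta'(1)>0$ gives $\Theta>0$ on $(1,\infty)$, hence $\Psi'>0$ on $(0,1)$, and $\Psi$ is strictly increasing on $[0,1]$.

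For part (2), $0<\alpha<1$, $u^{\alpha-1}$ is decreasing and convex, so the bounds above fail. I will follow the template of Lemma~\ref{lem:p-onevar} and show that $\Theta$ has a controlled sign pattern, namely that interior critical points of $\Psi$ are local minima. The trapezoid inequality now gives an upper bound $\Xi\le\bigl((z+\ell)^\alpha-z^\alpha\bigr)-(z+\ell)^{\alpha-1}-z^{\alpha-1}$, whose sign is indefinite. So I would switch to a different test, for example analyzing $\Theta(z)/z^{\alpha+1}$ or $\Theta'''$, to prove that $\Theta$ changes sign at most once on $(1,\infty)$, from positive to negative (or the reverse pattern after checking the asymptotics). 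As $z\to\infty$, the expansion gives $\Theta(z)\sim\bigl((\alpha+1)\ell-\ell-1\bigr)z^{\alpha+1}=(\alpha\ell-1)z^{\alpha+1}$ to leading order, which can have either sign. Consequently $\Psi$ is either monotone, or increasing then decreasing with a maximum, or decreasing then increasing; the claimed inequality needs the ``decreasing then increasing'' or monotone cases. Establishing the single sign change is the main obstacle. My first attempt would be to show that $z\mapsto \Theta(z)z^{-\alpha-1}$ is monotone, by differentiating and applying the mean value theorem to the differences $(z+\ell)^{\beta}-z^{\beta}$.
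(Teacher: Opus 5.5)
\textbf{Part (1).} This part is correct, and it takes a different route from the paper. You pass to $\Theta(z)$ with $z=1/\onevar$. Using only that $u\mapsto u^{\alpha-1}$ is nondecreasing, you get $\Xi(z)\ge(\ell-2)z^{\alpha-1}\ge0$ on $[1,\infty)$, and then $\Theta(z)\ge\Theta'(1)(z-1)>0$. The paper instead applies the mean value theorem directly to $s^{q-2}-1$ and $1-\onevar^{q-2}$, which gives $\Delta(\onevar)\ge(q-2)\onevar(1-\onevar)\bigl(\ell s-(s+1)\bigr)>0$ in one line. Both arguments use $\ell\ge2$ through the same quantity $\ell-2$. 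Yours also needs $\Theta'(1)>0$, which is essential: for $\ell=2$, $\alpha=1$ one has $\Xi\equiv0$ and $\Theta(z)=2(z-1)$. The monotonicity of $\Theta'(1)$ in $\alpha$ does not follow from Bernoulli's inequality. Use instead $\partial_\alpha\Theta'(1)=(1+\ell)^{\alpha+1}\log(1+\ell)-(\ell+2)>0$ for $\alpha\ge1$, $\ell\ge2$.

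\textbf{Part (2).} This part is not proved, and the concrete test you commit to cannot succeed. Since $\Theta(1)=0$, monotonicity of $z\mapsto\Theta(z)z^{-\alpha-1}$ on $[1,\infty)$ would force $\Theta$ to have constant sign on $(1,\infty)$, i.e.\ $\Psi$ would be monotone on $(0,1)$. That is false inside the lemma's range. For $\ell=3$, $q=2.6$ one finds $\Delta(0.01)\approx-7.7\cdot10^{-4}<0$ and $\Delta(0.9)\approx0.18>0$, so $\Psi$ has a genuine interior minimum. Your asymptotics are also off: $\Theta(z)z^{-\alpha-1}=\Delta(\onevar)/\onevar\to\alpha\ell-2$, not $\alpha\ell-1$, because you dropped the $-z^{\alpha+1}$ coming from the factor $z-1$.

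Moreover, ``at most one sign change'' is not enough by itself. You must show that $\Theta$ can only cross from positive to negative as $z$ increases. The opposite crossing is an interior maximum of $\Psi$, which is exactly what has to be excluded, and your proposal leaves this direction open. Concavity of $\Theta$, which would give the right pattern, also fails in general, since $z^{1-\alpha}\Xi(z)\to\alpha\ell-2$, which is positive when $\alpha\ell>2$.

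\textbf{The missing idea.} You need to divide out \emph{both} trivial zeros of $\Delta$, at $\onevar=0$ and at $\onevar=1$.
The paper writes $\Delta(\onevar)=\onevar(1-\onevar)H(\onevar)$, where $H(\onevar)=\ell s\,u_\alpha(s)-(s+1)u_\alpha(\onevar)$ and $u_\alpha(z)=(z^\alpha-1)/(z-1)$. In your variable this is $H=\Theta(z)/\bigl((z-1)z^{\alpha}\bigr)$, not $\Theta(z)z^{-\alpha-1}$. The paper then proves $H'>0$ on $(0,1)$.
\begin{itemize}
\item The tool is the representation $u_\alpha(z)=c_\alpha\int_0^\infty \frac{t^\alpha}{(1+t)(z+t)}\,\dd t$ with $c_\alpha=\sin(\pi\alpha)/\pi$, available for $0<\alpha<1$.
\item It gives $H'(\onevar)=c_\alpha\int_0^\infty\frac{t^\alpha}{1+t}\bigl(\frac{\ell^2t}{(s+t)^2}+\frac{2-\ell t}{(\onevar+t)^2}\bigr)\dd t$, whose integrand is not pointwise positive.
\item One substitutes $t=\ell\tau-1$ in the first term, which puts it over the same kernel $(\onevar+\tau)^{-2}$.
\item One then checks that $G(t)=\frac{(\ell t-1)^{\alpha+1}}{t}+\frac{t^\alpha(2-\ell t)}{1+t}>0$ for $t>1/\ell$; this is where $\ell\ge2$ enters.
\end{itemize}
A strictly increasing $H$ has at most one zero and crosses it from negative to positive. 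That is precisely the ``decreasing then increasing'' behaviour of $\Psi$ you identified as necessary.
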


\begin{proof}
Write $\Psi(\onevar)=\N(\onevar)\D(\onevar)^{-q/2}$ with
\[
\N(\onevar):=(1+\ell \onevar)^q+1+\ell \onevar^q,
\qquad
\D(\onevar):=2+2\ell \onevar+\ell(\ell+1)\onevar^2.
\]
Then
\[
\Psi'(\onevar)=q\ell \D(\onevar)^{-q/2-1}\,\Delta(\onevar),
\]
where
\[
\Delta(\onevar):=\D(\onevar)\bigl((1+\ell \onevar)^{q-1}+\onevar^{q-1}\bigr)-\N(\onevar)\bigl(1+(\ell+1)\onevar\bigr).
\]
Set $s:=1+\ell \onevar$. Then
\[
\D(\onevar)=s^2+1+\ell \onevar^2,
\qquad
1+(\ell+1)\onevar=s+\onevar,
\]
and we may simplify $\Delta$ as
\[
\Delta(\onevar)=(1-\onevar)s^{q-1}+(s+1)\onevar^{q-1}-(s+\onevar)
=(1-\onevar)(s^{q-1}-s)-(s+1)(\onevar-\onevar^{q-1}).
\]

(1) If $q\ge3$, then $q-1\ge2$.
By the mean value theorem, there exist $\xi\in [1,s]$ and $\eta\in[\onevar,1]$ such that
\[
s^{q-1}-s=s(s^{q-2}-1)
=(q-2)s(s-1)\xi^{q-3}
=(q-2)\onevar\ell s \xi^{q-3},
\]
and
\[
\onevar-\onevar^{q-1}=\onevar(1-\onevar^{q-2})
=(q-2)\onevar(1-\onevar)\eta^{q-3}.
\]
Substituting into the formula for $\Delta$ yields
\[
\Delta(\onevar)=(q-2)\onevar(1-\onevar)\Bigl(\ell s\,\xi^{q-3}-(s+1)\eta^{q-3}\Bigr).
\]
Since $q-3\ge0$ and $0\le \eta\le 1\le \xi$, we have $0\le \eta^{q-3}\le 1\le \xi^{q-3}$. Therefore
\[
\Delta(\onevar)\ge (q-2)\onevar(1-\onevar)(\ell s-(s+1)).
\]
Recalling $s=1+\ell \onevar$ and $\ell\ge2$, we get
\[
\ell s-(s+1)=\ell-2+\ell(\ell-1)\onevar> 0
\]
for all $0<\onevar<1$. Hence for all $0<\onevar<1$, $\Delta(\onevar)> 0$, and thus $\Psi'(\onevar)> 0$. This proves (1).

(2) Assume now that $2<q<3$. Write
\[
q=2+\alpha,
\qquad 0<\alpha<1.
\]
Then the formula for $\Delta$ becomes
\[
\Delta(\onevar)=(1-\onevar)(1+\ell \onevar)^{1+\alpha}+(2+\ell \onevar)\onevar^{1+\alpha}-1-(\ell+1)\onevar.
\]
Write
\[
u_\alpha(z):=\frac{z^\alpha-1}{z-1}, \qquad z\ne 1.
\]
Rearranging gives
\[
\Delta(\onevar)=\onevar(1-\onevar)H(\onevar),
\]
where
\[
H(\onevar):=\frac{(1+\ell \onevar)^{1+\alpha}-(1+\ell \onevar)}{\onevar}-\frac{2+\ell \onevar}{1-\onevar}(1-\onevar^\alpha)
=\ell s\,u_\alpha(s)-(s+1)u_\alpha(\onevar), \qquad s=1+\ell \onevar.
\]

We show that $\Delta$ has at most one root in $(0,1)$. Since $\Delta$ and $H$ have the same roots in $(0,1)$, it suffices to prove that $H$ is strictly increasing on $(0,1).$ The function $\Delta$ itself need not be monotone on $(0,1)$, so we study $H$ instead.

Recall that for $\alpha\in (0,1)$, one has the integral representation for all $z>0$
\begin{equation}
    z^{\alpha}=c_\alpha \int_0^\infty t^\alpha\left(\frac{1}{t}-\frac{1}{t+z}\right)\,\dd t, \qquad c_\alpha=\frac{\sin(\pi\alpha)}{\pi}>0.
\end{equation}
This gives
\[
u_\alpha(z)=c_\alpha \int_0^\infty \frac{t^\alpha}{(1+t)(z+t)}\,\dd t,
\]
and
\[
u_\alpha'(z)=-c_\alpha \int_0^\infty \frac{t^\alpha}{(1+t)(z+t)^2}\,\dd t.
\]

Now we compute $H'$. Recall that $s=1+\ell \onevar$, so
\[
H'(\onevar)
= \ell^2 u_\alpha(s)+\ell^2 s u_\alpha'(s)-\ell u_\alpha(\onevar)-(s+1)u_\alpha'(\onevar).
\]

Substituting the above integral formulas yields
\[
H'(\onevar)
=
c_\alpha\int_0^\infty \frac{t^\alpha}{1+t}
\left[
\frac{\ell^2}{s+t}
-\frac{\ell^2 s}{(s+t)^2}
-\frac{\ell}{\onevar+t}
+\frac{s+1}{(\onevar+t)^2}
\right]\dd t.
\]
Since
\[
\frac{\ell^2}{s+t}-\frac{\ell^2 s}{(s+t)^2}
=
\frac{\ell^2 t}{(s+t)^2},
\]
and
\[
-\frac{\ell}{\onevar+t}+\frac{s+1}{(\onevar+t)^2}
=
\frac{-(\ell \onevar+\ell t)+(s+1)}{(\onevar+t)^2}
=
\frac{2-\ell t}{(\onevar+t)^2},
\]
we obtain
\[
H'(\onevar)
=
c_\alpha\int_0^\infty \frac{t^\alpha}{1+t}
\left[
\frac{\ell^2 t}{(s+t)^2}
+
\frac{2-\ell t}{(\onevar+t)^2}
\right]\dd t.
\]

One might expect the sum of the two terms in the bracket to be positive, but this can be false. Therefore, in the first integral term, we make the change of variable
\[
t=\ell \tau-1,\qquad \tau\in (1/\ell,\infty),
\]
so that $1+t=\ell\tau, \ s+t=\ell(\onevar+\tau)$ and thus
\[
\int_0^\infty \frac{t^\alpha}{1+t}\frac{\ell^2 t}{(s+t)^2}\,\dd t
=
\int_{1/\ell}^\infty \frac{(\ell\tau-1)^{\alpha+1}}{\tau(\onevar+\tau)^2}\,\dd \tau.
\]
Renaming $\tau$ back to $t$, and splitting the second integral term of $H'$ accordingly yields
\begin{align*}
    H'(\onevar)
&=c_\alpha \int_{1/\ell}^\infty \frac{(\ell t-1)^{\alpha+1}}{t(\onevar+t)^2}\,\dd t+
c_\alpha\left(\int_0^{1/\ell}+\int_{1/\ell}^{\infty}\right)\frac{t^\alpha(2-\ell t)}{(1+t)(\onevar+t)^2}\,\dd t\\
&=
 c_\alpha\int_0^{1/\ell} \frac{t^\alpha(2-\ell t)}{(1+t)(\onevar+t)^2}\,\dd t+
c_\alpha\int_{1/\ell}^\infty \frac{G(t)}{(\onevar+t)^2}\,\dd t,
\end{align*}
where
\[
G(t):=\frac{(\ell t-1)^{\alpha+1}}{t}+\frac{t^\alpha(2-\ell t)}{1+t},
\qquad t> \frac1\ell.
\]

We claim that $G(t)>0$ for all $t> 1/\ell$. Indeed, if $1/\ell\le t\le 2/\ell$, then
$2-\ell t\ge 0$, so clearly $G(t)>0$. If $t>2/\ell$, then $\ell t-2>0$. The desired $G(t)>0$ is equivalent to
\[
\frac{1+t}{t}\cdot \frac{\ell t-1}{\ell t-2}\cdot \left(\frac{\ell t-1}{t}\right)^{\alpha}>1.
\]
The first two terms are strictly larger than one. For the last term, recall that $\ell \ge 2$ and $t\ge 2/\ell$:
\[
\frac{\ell t-1}{t}
=\ell -\frac{1}{t}\ge \ell-\frac{\ell}{2}=\frac{\ell}{2}\ge 1,
\]
yielding $\left(\frac{\ell t-1}{t}\right)^{\alpha}\ge 1$ because $\alpha>0$. This concludes the proof of $G(t)>0$ for $t>1/\ell$.

According to the above integral representation of $H'$, we actually proved that
$H'(\onevar)>0$, $\onevar\in(0,1)$. So,
$H$ is strictly increasing on $(0,1)$, and can have at most one zero $\onevar_0$. If such $\onevar_0\in (0,1)$ exists, then $H(\onevar)<0$ for $\onevar<\onevar_0$ and $H(\onevar)>0$ for $\onevar>\onevar_0$. Since $\Delta(\onevar)=\onevar(1-\onevar)H(\onevar)$, the same sign change holds for $\Delta$, and hence for $\Psi$ as well. So, $\onevar_0$ is an interior minimizer for $\Psi$. If no zero exists, then $H$, and thus $\Delta$ and $\Psi$, have no sign change on $(0,1)$. Thus $\Psi$ is either strictly increasing or decreasing on $(0,1)$.

Therefore, in either case, we always have
\[
\Psi(\onevar)<\max\{\Psi(0),\Psi(1)\},\qquad \onevar\in (0,1).
\]
This finishes the proof of the lemma. 
\end{proof}

Now we are ready to finish the proof of the main theorem for $q>2$.

\begin{proof}[Proof of Theorem~\ref{thm:main} (3)]
The case $d=3$ is due to Holevo and Utkin \cite{HU26}, so we may assume $d\ge4$.

Let $x=(x_i)_i$ be an optimizer. According to the above discussion, by the Lagrange multiplier theorem, there exist $\lambda,\mu\in\mathbb R$ such that
\[
q|x_i|^{q-2}x_i=\lambda+2\mu x_i,
\qquad i=1,\dots,d.
\]

If $\lambda=0$, Lemma~\ref{lem:lambda0} gives
\[
F(x)\le 2^{1-q/2}.
\]
and the equality is attained when
\[
x=\frac1{\sqrt2}(1,-1,0,\dots,0)
\]
up to permutation.

If $\lambda\ne0$, then replacing $x$ by $-x$ if necessary, we may assume $\lambda>0$. By Lemma~\ref{lem:one-positive}, after permutation,
\[
x=(a,-y_1,\dots,-y_{d-1}),
\qquad a>0,
\qquad y_j>0.
\]
In case all the $y_j$'s are equal, $x$ is of the form
\[
x=\left(\sqrt{\frac{d-1}{d}},-\frac1{\sqrt{d(d-1)}},\dots,-\frac1{\sqrt{d(d-1)}}\right).
\]
Otherwise, by Lemma~\ref{lem:structure-reduction}, up to permutation, $x$ has the form
\[
x=(a,-b,-c,\dots,-c),
\]
with $b>c>0$. With the parameter $\onevar=c/b\in (0,1)$, we can solve $a,b,c$ as functions of $\onevar$, and it reduces to maximizing
\[
\Psi(\onevar)=\frac{(1+\ell \onevar)^q+1+\ell \onevar^q}{\bigl(2+2\ell \onevar+\ell(\ell+1)\onevar^2\bigr)^{q/2}},
\qquad \ell=d-2,
\qquad 0<\onevar<1.
\]
Extend the definition of $\Psi$ to $[0,1]$ and its endpoint values are
\[
\Psi(0)=2^{1-q/2},
\qquad
\Psi(1)=\frac{(d-1)^{q/2}+(d-1)^{1-q/2}}{d^{q/2}}.
\]
If $q\ge3$, Lemma~\ref{lem:onevar} shows that $\Psi$ is strictly increasing on $(0,1)$, hence
\[
\Psi(\onevar)<\Psi(1),\qquad 0<\onevar<1.
\]
If $2<q<3$, Lemma~\ref{lem:onevar} gives
\[
\Psi(\onevar)<\max\{\Psi(0),\Psi(1)\},\qquad 0<\onevar<1.
\]
Therefore, $x$ cannot take the form
\[
x=(a,-b,-c,\dots,-c),\qquad a>0, \qquad b>c>0.
\]

All combined, we conclude that
\[
\max_{x\in \set}F(x)=\max\left\{2^{1-q/2},\frac{(d-1)^{q/2}+(d-1)^{1-q/2}}{d^{q/2}}\right\}.
\]
Since $F(x)=\|x\|_q^q$ on $\set$, this proves Theorem~\ref{thm:main}(3) by taking the $q$-th root.
\end{proof}

\section{A complementary analysis in the exponent}
\label{section:r-analysis}

We close with a complementary view of the one-dimensional problem. For a fixed integer $\ell\ge1$, recall the functions
\[
\Psi_\expvar(\onevar)=\frac{(1+\ell \onevar)^\expvar+1+\ell \onevar^\expvar}{\bigl(2+2\ell \onevar+\ell(\ell+1)\onevar^2\bigr)^{\expvar/2}},
\qquad 0\le \onevar\le1.
\]
A direct differentiation gives, for $0<\onevar<1$,
\[
\Psi_\expvar'(\onevar)=\expvar\ell\,\D(\onevar)^{-\expvar/2-1}\,\Delta_\onevar(\expvar),
\qquad
\D(\onevar)=2+2\ell \onevar+\ell(\ell+1)\onevar^2,
\]
where
\[
\Delta_\onevar(\expvar):=(1-\onevar)(1+\ell \onevar)^{\expvar-1}+(2+\ell \onevar)\onevar^{\expvar-1}-\bigl(1+(\ell+1)\onevar\bigr).
\]
In the proofs above, the exponent was fixed and the sign of this main factor in $\Psi_\expvar'(\onevar)$ was analyzed as a function of $\onevar$. We now fix $\onevar$ and instead regard $\Delta_\onevar(\expvar)$ as a function of the exponent $\expvar$. In particular, the following proposition (1) gives an alternative proof of the $d=3$ case obtained in \cite{HU26}.

\begin{proposition}\label{prop:new}
Let $\ell\ge1$, and let $\Psi_\expvar$ and $\Delta_\onevar$ be defined as above. For each $0<\onevar<1$, the function $\expvar\mapsto\Delta_\onevar(\expvar)$ is strictly convex on $(0,\infty)$ and satisfies
\[
\Delta_\onevar(1)=2(1-\onevar),
\qquad
\Delta_\onevar(2)=0.
\]
Consequently, if $0<\expvar<1$, then $\Psi_\expvar$ is strictly increasing on $[0,1]$. Moreover:
\begin{enumerate}
\item If $\ell=1$, then $\Psi_\expvar$ is strictly increasing on $[0,1]$ for $1<\expvar<2$ and $\expvar>4$, strictly decreasing on $[0,1]$ for $2<\expvar<4$, and
\[
\Psi_2(\onevar)\equiv 1,
\qquad
\Psi_4(\onevar)\equiv \frac12.
\]
In particular, for every $\expvar>0$, the extrema of $\Psi_\expvar$ on $[0,1]$ are attained at the endpoints.
\item If $\ell\ge2$ and $\expvar\ge3$, then $\Psi_\expvar$ is strictly increasing on $[0,1]$.
\end{enumerate}
\end{proposition}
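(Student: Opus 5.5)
Fix $0<\onevar<1$ and regard $\Delta_\onevar(\expvar)$ as a function of $\expvar$. Since $1+\ell\onevar>1$, the term $(1-\onevar)(1+\ell\onevar)^{\expvar-1}=(1-\onevar)e^{(\expvar-1)\log(1+\ell\onevar)}$ is a positive multiple of a strictly convex exponential in $\expvar$. Likewise, $\log\onevar\neq 0$ makes $(2+\ell\onevar)\onevar^{\expvar-1}$ a positive multiple of a strictly convex exponential. The remaining term $-(1+(\ell+1)\onevar)$ is constant, so $\Delta_\onevar$ is strictly convex on $(0,\infty)$.

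The two values come from direct substitution. At $\expvar=1$ we get $(1-\onevar)+(2+\ell\onevar)-1-(\ell+1)\onevar=2(1-\onevar)$. At $\expvar=2$ we get $(1-\onevar)(1+\ell\onevar)+(2+\ell\onevar)\onevar-1-(\ell+1)\onevar=0$.

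Strict convexity together with $\Delta_\onevar(1)>0=\Delta_\onevar(2)$ forces $\Delta_\onevar$ to be strictly decreasing on $(0,1]$: the slope on $[1,2]$ is negative, so every slope to the left is more negative. Hence $\Delta_\onevar(\expvar)>\Delta_\onevar(1)>0$ for $0<\expvar<1$. By the displayed formula for $\Psi_\expvar'$, it follows that $\Psi_\expvar'>0$ on $(0,1)$.

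For (1), set $\ell=1$. Here $\Psi_2\equiv1$ is immediate, since numerator and denominator both equal $(1+\onevar)^2+1+\onevar^2$. The key extra step is to check $\Delta_\onevar(4)=0$. With $\ell=1$ this reads
\[
(1-\onevar)(1+\onevar)^3+(2+\onevar)\onevar^3-1-2\onevar=0,
\]
which is a polynomial identity I will expand. The identity $\Psi_4\equiv\frac12$ then follows by evaluating at $\onevar=0$, since $\Psi_4'\equiv0$.

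A strictly convex function has at most two zeros, and here they are $\expvar=2$ and $\expvar=4$. Therefore $\Delta_\onevar>0$ on $(1,2)\cup(4,\infty)$ and $\Delta_\onevar<0$ on $(2,4)$. Combined with the case $\expvar\le1$ already handled (at $\expvar=1$, $\Delta_\onevar=2(1-\onevar)>0$), this gives the stated monotonicity, and hence extrema at the endpoints.

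For (2), with $\ell\ge2$, I expect the main obstacle to be locating the second root, since it is no longer explicit. I will show $\Delta_\onevar(3)>0$. Indeed,
\[
\Delta_\onevar(3)=(1-\onevar)(1+\ell\onevar)^2+(2+\ell\onevar)\onevar^2-1-(\ell+1)\onevar,
\]
which simplifies to $\onevar(1-\onevar)\bigl(\ell-2+\ell(\ell-1)\onevar\bigr)$. This is positive for $0<\onevar<1$ when $\ell\ge2$, strictly so even at $\ell=2$ because the term $\ell(\ell-1)\onevar$ is positive. This matches the mean value computation in Lemma~\ref{lem:onevar}.

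So $\Delta_\onevar(2)=0<\Delta_\onevar(3)$. By strict convexity, the slope on $[2,3]$ is positive and all slopes to the right are larger, so $\Delta_\onevar$ is strictly increasing on $[3,\infty)$. Hence $\Delta_\onevar(\expvar)\ge\Delta_\onevar(3)>0$ for every $\expvar\ge3$, and $\Psi_\expvar$ is strictly increasing. The positivity at $\expvar=3$ is the only place where $\ell\ge2$ is used; for $\ell=1$ one gets $\Delta_\onevar(3)=-\onevar(1-\onevar)^2<0$, which is consistent with (1).
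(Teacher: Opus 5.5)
Your proposal is correct and follows the paper's proof essentially step for step: strict convexity of $r\mapsto\Delta_w(r)$, the values at $r=1,2$ (plus $r=4$ when $\ell=1$ and $r=3$ when $\ell\ge2$), and chord-slope comparisons, which you phrase as monotonicity where the paper uses an explicit slope inequality and a convex-combination contradiction. There is one harmless slip in your closing side remark: for $\ell=1$ the formula gives $\Delta_w(3)=-w(1-w)$, not $-w(1-w)^2$, though the sign, and hence your consistency check, is unaffected.
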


\begin{proof}
Fix $0<\onevar<1$. A direct differentiation gives
\[
\Delta_\onevar''(\expvar)
=(1-\onevar)(1+\ell \onevar)^{\expvar-1}\log^2(1+\ell \onevar)
+(2+\ell \onevar)\onevar^{\expvar-1}\log^2\onevar>0,
\]
so $\expvar\mapsto\Delta_\onevar(\expvar)$ is strictly convex on $(0,\infty)$. Also,
\[
\Delta_\onevar(1)=(1-\onevar)+(2+\ell \onevar)-1-(\ell+1)\onevar=2(1-\onevar),
\qquad
\Delta_\onevar(2)=0.
\]
Since $\Delta_\onevar$ is strictly convex, for $0<\expvar<1$ we have
\[
\frac{\Delta_\onevar(1)-\Delta_\onevar(\expvar)}{1-\expvar}
<
\frac{\Delta_\onevar(2)-\Delta_\onevar(1)}{2-1}
=-\Delta_\onevar(1),
\]
and therefore
\[
\Delta_\onevar(\expvar)>(2-\expvar)\Delta_\onevar(1)=2(2-\expvar)(1-\onevar)>0.
\]
Hence $\Psi_\expvar'(\onevar)>0$ for every $0<\expvar<1$.

Assume first that $\ell=1$. Then a direct expansion gives
\[
\Delta_\onevar(4)=(1-\onevar)(1+\onevar)^3+(2+\onevar)\onevar^3-(1+2\onevar)=0.
\]
Since $\Delta_\onevar$ is strictly convex and vanishes at $2$ and $4$, we obtain
\[
\Delta_\onevar(\expvar)>0\quad\text{for }1<\expvar<2\text{ and }\expvar>4,
\qquad
\Delta_\onevar(\expvar)<0\quad\text{for }2<\expvar<4.
\]
Because the prefactor in $\Psi_\expvar'(\onevar)$ is positive, the same sign statements hold for $\Psi_\expvar'(\onevar)$. Hence $\Psi_\expvar$ is strictly increasing on $(0,1)$ when $1<\expvar<2$ or $\expvar>4$, and strictly decreasing on $(0,1)$ when $2<\expvar<4$.

When $\expvar=2$, we have
\[
\Psi_2(\onevar)=\frac{(1+\onevar)^2+1+\onevar^2}{2+2\onevar+2\onevar^2}\equiv 1.
\]
When $\expvar=4$, the identity $\Delta_\onevar(4)=0$ shows that $\Psi_4'(\onevar)=0$ for all $0<\onevar<1$, so $\Psi_4$ is constant on $[0,1]$. Evaluating at $\onevar=0$ gives
\[
\Psi_4(\onevar)\equiv \Psi_4(0)=\frac{2}{2^2}=\frac12.
\]
This proves (1).

Assume now that $\ell\ge2$. A direct expansion yields
\[
\Delta_\onevar(3)
=(1-\onevar)(1+\ell \onevar)^2+(2+\ell \onevar)\onevar^2-1-(\ell+1)\onevar
=\onevar(1-\onevar)\bigl(\ell-2+\ell(\ell-1)\onevar\bigr)>0.
\]
We claim that $\Delta_\onevar(\expvar)>0$ for every $\expvar\ge3$. Indeed, if there were some $\expvar_0>3$ with $\Delta_\onevar(\expvar_0)\le0$, then convexity on the interval $[2,\expvar_0]$ together with $\Delta_\onevar(2)=0$ would imply
\[
\Delta_\onevar(3)\le \frac{\expvar_0-3}{\expvar_0-2}\Delta_\onevar(2)+\frac{1}{\expvar_0-2}\Delta_\onevar(\expvar_0)\le0,
\]
a contradiction. Hence $\Delta_\onevar(\expvar)>0$ for all $\expvar\ge3$. Therefore $\Psi_\expvar'(\onevar)>0$ for all $0<\onevar<1$, which proves (2).
\end{proof}

\end{document}